 \newtheorem{theorem}{Theorem}[section]
 \newtheorem{lem}[theorem]{Lemma}
 \newtheorem{prop}[theorem]{Proposition}
\theoremstyle{remark} 
 \newtheorem{rem}{Remark}
\begin{document}
\title[Inverse scattering for NLS]{
Inverse scattering 
for the nonlinear Schr\"{o}dinger equation 
with the Yukawa potential}
\author[H. Sasaki]{
Hironobu Sasaki$^*$}
\thanks{$^*$Supported by Research Fellowships of 
the Japan Society for the Promotion of Science 
for Young Scientists.} 
\address{
Department of Mathematics, Osaka University, 
563-0043, Japan.}
\email{hisasaki@cr.math.sci.osaka-u.ac.jp}
\subjclass[2000]{35R30, 35P25, 35Q40}
\keywords{Inverse scattering; 
Yukawa potential; 
nonlinear Schr\"{o}dinger equation; 
semi-relativistic Hartree equation.} 
\maketitle
\begin{abstract}
We study 
the inverse scattering problem 
for the three dimensional 
nonlinear Schr\"{o}dinger equation 
with the Yukawa potential. 
The nonlinearity of the equation is nonlocal.
We reconstruct the potential 
and the nonlinearity 
by the knowledge of the 
scattering states.  
Our result is applicable to 
reconstructing the nonlinearity of
the semi-relativistic Hartree equation.
\end{abstract}
\section{Introduction}\label{sec:intro}
We consider the inverse scattering problem 
for the three dimensional 
nonlinear Schr\"{o}dinger equation 
\begin{equation}\label{NLS}
i\partial_t u +
\Delta u +
Q_0 \frac{\exp(-\mu_0 r)}{r}u - 
\left(
Q_1 \frac{\exp(-\mu_1 r)}{r} \ast |u|^2
\right) u =0 \tag{NLS}
\end{equation}
in $\mathbb{R}\times\mathbb{R}^3$. 
Here, $u$ is a complex-valued unknown function of 
$(t,x)\in \mathbb{R}\times \mathbb{R}^3$, 
$\partial_t =\partial /\partial t$,  
$\Delta$ is the Laplacian in $\mathbb{R}^3$, 
$r=|x|$, 
$Q_0,Q_1 \in \mathbb{R}^3$, 
$\mu_0,\mu_1>0$ and 
$\ast$ is the convolution in the space variables.  
Recall that the functions 
\[
V_j:=-Q_j\dfrac{\exp(-\mu_j r)}{r},
\quad   
j=0,1, 
\]
are said to be the Yukawa potential.
The equation (\ref{NLS}) is approximately 
derived from the generalization of 
the electronic Hamiltonian 
for an $N$-electron atom in a plasma: 
\[
\mathbf{H}=
-\frac{1}{2}\sum_{j=1}^N\Delta_j 
-\sum_{j=1}^N\frac{Z\exp(-\mu_0 |x^j|)}{|x^j|}
+\sum_{j>k}^N\frac{\exp(-\mu_1 |x^j-x^k|)}{|x^j-x^k|},
\]  
where 
$x^j\in \mathbb{R}^3$ 
is the space variables for the $j$-th particle, 
$\Delta_j$ is the Laplacian with respect to $x^j$, 
$Z$ is the nuclear charge 
and  
$\mu_l$, $l=0,1$, are parameters depending on 
the density and the temperature of the plasma 
(see, e.g., Mukherjee--Karwowski--Diercksen \cite{MKD}).

In order to mention 
the inverse scattering problem, 
we introduce the definition of 
the scattering operator 
for the nonlinear evolution equation
\begin{equation}\label{DE}
i\partial_t v(t)+J(v(t))=f(v(t)), 
\quad
t\in\mathbb{R}, 
\end{equation}
where 
$v$ is a complex-valued function 
on the Hilbert space $X$, 
$J$ is a self-adjoint operator on $X$ 
and 
$f$ is a perturbed term.  
Let $B(\delta;X)$ be the set of all 
$\phi\in X$ with $\| \phi\|_X \le \delta$.
\textit{The scattering operator} $S$ 
is defined by the mapping 
\[
S:B(\delta;X)\ni \phi_- \mapsto \phi_+ \in X 
\]
if the following condition holds for some $\delta>0$ and 
some $Z\subset C(\mathbf{R};X)$: \\

\textit{
For any $\phi_-\in B(\delta;X)$, 
there uniquely exists $v\in Z$ 
such that $v$ is a time-global solution to (\ref{DE}) 
and 
satisfies  
\[
\lim_{t\to-\infty}
\| v(t)-e^{itJ}\phi_-\|_X =0.
\]
Furthermore, 
there uniquely exists  $\phi_+\in X$ 
such that 
\[
\lim_{t\to\infty}
\| v(t)-e^{itJ}\phi_+\|_X =0.
\]
}
We remark that $e^{itJ}\phi$ is 
a solution to the Cauchy problem for 
\begin{eqnarray*}
\left\{
  \begin{array}{rl}
   i\partial_t v(t)+J(v(t))=0,&
   \quad t\in\mathbb{R},\\
    v(0)=\phi .&    \\
  \end{array}
\right.
\end{eqnarray*}

The inverse scattering problem for 
the equation (\ref{DE})
is 
to recover the perturbed term $f$  
by applying the knowledge of the 
scattering operator $S$. 
Before we treat (\ref{NLS}), 
we first review 
the inverse scattering problem 
for the Schr\"{o}dinger equation 
with power nonlinearity briefly. 
Strauss \cite{Strauss 1973} 
considered the nonlinear Schr\"{o}dinger equation 
\[
i\partial_t u +\Delta u =V(x)|u|^{p-1}u, 
\quad (t,x)\in \mathbb{R}\times \mathbb{R}^n.
\]
Suppose that 
$p$ is an integer satisfying 
\[
\left\{
  \begin{array}{cl}
    p>4   &\text{if $n=1$},\\
    p>3   &\text{if $n=2$},\\
    p\ge 3&\text{if $n\ge 3$},\\
  \end{array}
\right.
\]
and $V(x)$ is real-valued continuous and bounded, 
whose derivatives up to order $l>3n/4$ are bounded. 
Then the scattering operator $S$ is well-defined. 
It was shown that $V(x)$ is recovered from 
the scattering operator by the following way: 
For $s\in\mathbb{R}^n$, 
let $H^s(\mathbb{R}^n)$ be 
the Sobolev space 
$(1-\Delta)^{-s/2}L^2(\mathbb{R}^n)$. 
For any 
$\phi\in H^1(\mathbb{R}^n)\cap L^{1+1/p}(\mathbb{R}^n)$, 
we have
\begin{equation}\label{MS}
V(x_0)=
\frac{\displaystyle{\lim_{\alpha\to 0}\alpha^{-(n+2)}I[\phi_{\alpha,x_0}]}}
{\displaystyle{
\int_{\mathbb{R}}\int_{\mathbb{R}^n}
| e^{it\Delta}\phi(x)|^{p+1} 
dxdt}} ,
\end{equation}
where 
$\phi_{\alpha,x_0}(x)=\phi(\alpha^{-1}(x-x_0))$, 
$\alpha>0,x,x_0\in \mathbb{R}^n$ 
and  
\[
I[\phi]=\lim_{\varepsilon\to 0}  
\frac{i}{\varepsilon^p}
\big\langle 
(S-id)(\varepsilon\phi),\phi
\big\rangle_{L^2(\mathbb{R}^n)} .
\]  
The above limit is called 
the small amplitude limit.  
Later, Weder 
\cite{Weder97,Weder00-2,Weder01-1,Weder01-2,Weder05}
proved that  
a more general class of nonlinearities 
is uniquely reconstructed, 
and moreover, 
a method is given for the unique reconstruction 
of the potential that acts as a linear operator 
and that this problem was not considered in \cite{Strauss 1973}.

Unfortunately, 
the above methods to obtain 
the reconstruction formulas  
are not applicable to the case (\ref{NLS}) 
even if $Q_0=0$.
The essential point to prove the formula (\ref{MS}) 
is the change of variables in the following integral: 
\[
I[\phi]=
\int_{\mathbb{R}}\int_{\mathbb{R}^n}
V(x)|e^{it\Delta}\phi(x)|^{p+1}
dxdt .
\]
By changing variable $x$ by 
$\alpha^{-1}(x-x_0)$, 
we have 
\[
I[\phi_{\alpha,x_0}]=
\alpha^{n+2}
\int_{\mathbb{R}}\int_{\mathbb{R}^n}
V(x_0+\alpha x)
|e^{it\Delta}\phi(x)|^{p+1}
dxdt .
\]  
Therefore, 
as $\alpha\to 0$, 
we can take the value $V(x_0)$ 
from the inside integral.
Applying the same method to 
(\ref{NLS}) with $Q_0=0$,  
we obtain 
\begin{align*}
I[\phi_{\alpha,x_0}]=&
\int_{\mathbb{R}}\int_{\mathbb{R}^n}
\left(
V\ast |e^{it\Delta}\phi_{\alpha,x_0}|^2
\right) (x)
|e^{it\Delta}\phi_{\alpha,x_0}(x)|^2
dxdt \\
=&
\alpha^{2n+2}
\int_{\mathbb{R}}\int_{\mathbb{R}^n}
\left(
V(\alpha \cdot)\ast |e^{it\Delta}\phi|^2
\right) (x)
|e^{it\Delta}\phi(x)|^2
dxdt ,
\end{align*}
where $V(x)=Q_1\dfrac{\exp(-\mu_1 r)}{r}$.
Since the integral 
\[
\int_{\mathbb{R}}\int_{\mathbb{R}^n}
\left(
V(0)\ast |e^{it\Delta}\phi|^2
\right) (x)
|e^{it\Delta}\phi(x)|^2
dxdt
\]
does not converge, 
we can not make $\alpha$ tend to infinity. 

We next review 
the inverse scattering problem 
for the nonlinear Schr\"{o}dinger equation 
with a cubic convolution 
\begin{equation}\label{Hartree}
i\partial_t u +\Delta u +\widetilde{V}(x)u=F^\sigma(u), 
\quad (t,x)\in \mathbb{R}\times \mathbb{R}^n.
\end{equation}
Here,  
$\widetilde{V}:\mathbb{R}^3\to\mathbb{C}$ 
is measurable and  
satisfies some suitable condition,  
\begin{eqnarray*}
F^\sigma(u)= 
\lambda(x)(|\cdot|^{-\sigma}\ast |u|^2)u 
\end{eqnarray*}
and $\lambda \in C^1(\mathbf{R}^n)\cap W^1_\infty(\mathbf{R}^n)$.
It was proved by 
Watanabe \cite{Watanabe1}
that 
if $\sigma$ is a given number, 
then we can reconstruct 
$V$ and $\lambda$ 
by the knowledge of the scattering operator.
Watanabe \cite{Watanabe3} determined $\sigma$ 
of the term $F^\sigma$ 
if $\widetilde{V}\equiv 0$ and  
$\lambda(x)$ is a non-zero constant function. 
Under the condition $\widetilde{V}\equiv 0$, 
Sasaki \cite{Sasaki 2007} proved that   
$\sigma$ of $F^\sigma$  
can be determined 
even if $\lambda_j$ is not a constant. 
In fact, $\sigma$ is given by 
\begin{align}
\sigma&=
2n+2-
\lim_{\alpha\to 0}\ln
\dfrac{|T[\phi_{e\alpha}]|}{
|T[\phi_\alpha]|+\alpha^{2n+2}},\label{rec:sasaki}\\
T[\phi]&=
\lim_{\varepsilon\to 0}  
\frac{i}{\varepsilon^3}
\big\langle 
(S-id)(\varepsilon\phi),\phi
\big\rangle_{L^2(\mathbb{R}^n)},\nonumber
\end{align}
where 
$e$ is the base of the natural logarithm, 
$\phi\in H^1(\mathbb{R}^n)\setminus \{ 0\}$, 
$\phi_\alpha = \phi_{\alpha,0}$ 
and 
$S$ is the scattering operator. 
For other results of 
the inverse scattering problem for (\ref{Hartree}), 
see Watanabe \cite{Watanabe2,Watanabe4} and 
Sasaki--Watanabe \cite{Sasaki-Watanabe}.

As we mention before, 
we study the inverse scattering problem 
for (\ref{NLS}). 
Remark that we can not directly apply 
the known results 
to recovering the functions $V_j$, $j=0,1$.
Our goal in this paper 
is to give a formula 
for determining the parameter $Q_j$ and $\mu_j$, $j=0,1$,  
by using the knowledge of the scattering operator  
for (\ref{NLS}) given by Theorem \ref{thm:direct} below.

We now define some 
notation  
which will be used later. 
Let $\mathbb{Z}_{\ge 0}$ be 
the set of all non-negative integers. 
For $\Omega\subset \mathbb{R}^n$, 
let $C^\infty_c(\Omega)$ be 
the set of all smooth functions 
with compact support in $\Omega$.
We put 
$L^2(\mathbb{R}^3)=\mathcal{H}$. 
We denote  
the norm and the inner product of 
$\mathcal{H}$ 
by 
$\|\cdot\|$ and 
$\left\langle \cdot ,\cdot\right\rangle$, 
respectively. 
For $1\le p,q\le \infty$, 
$\|\cdot \|_q$ and $\| \cdot \|_{(p,q)}$ 
denote 
$\|\cdot \|_{L^q(\mathbb{R}^3)}$ 
and 
$\|\cdot \|_{L^p(\mathbb{R};L^q(\mathbb{R}^3))}$, 
respectively. 
We set  
$F(u)=-(V_1\ast |u|^2)u$. 
Let $H$ be 
an unbounded operator on 
${\mathcal H}$ 
defined by 
\[
D(H)=D(-\Delta)=H^2(\mathbb{R}^3), 
\quad 
H=-\Delta+V_0.
\] 
The Kato-Rellich theorem implies that 
$H$ is self-adjoint on $D(H)$ 
(for the detail, see Theorem X.15 in \cite{RS2}). 
Therefore, we see that 
$e^{-itH}:{\mathcal H}\to {\mathcal H}$ 
is a unitary operator. 
That is, we have 
\begin{equation}\label{unitary}
\| e^{-itH}\phi\| =\| \phi\|
\end{equation}
for any $\phi\in\mathcal{H}$. 
Our first result is concerned with 
the direct scattering problem for (\ref{NLS}). 
\begin{theorem}\label{thm:direct}
Assume that 
\begin{equation}\label{RK}
|Q_0|<\mu_0.
\end{equation}
Let $Y_1=L^3(\mathbb{R};L^{18/7}(\mathbb{R}^3))$ 
and $Z_1=C(\mathbb{R};{\mathcal H})\cap Y_1$. 
Then there exists some $\delta>0$ such that 
if $\phi_- \in B(\delta;{\mathcal H})$, 
then  
there uniquely exists $u\in Z_1$ 
such that $u$ is a time-global solution to (\ref{DE})  
and satisfies  
\begin{align}
u(t)&=e^{-itH}\phi_- 
+
\frac{1}{i}\int^{t}_{-\infty}
e^{-i(t-\tau)H}F(u(\tau))d\tau ,\label{IE}\\
\lim_{t\to-\infty}&
\| u(t)-e^{-it\Delta}\phi_-\| =0.\label{AB_1}
\end{align}
Furthermore, 
there exists a unique 
$\phi_+\in {\mathcal H}$
such that 
\begin{align}
\lim_{t\to\infty}&
\| u(t)-e^{-it\Delta}\phi_+\| =0.\label{AB_2}
\end{align}
Therefore, 
the scattering operator for (\ref{NLS})
\[
S_1:B(\delta;{\mathcal H}) 
\ni \phi_- \mapsto \phi_+ \in 
{\mathcal H}
\]
is well-defined. 
\end{theorem}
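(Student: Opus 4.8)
The plan is to solve the integral equation (\ref{IE}) by a fixed-point argument in a small ball of $Z_1$ and then to read off the asymptotic states from the Duhamel formula. Given $\phi_-\in B(\delta;\mathcal H)$, put
\[
(\Phi u)(t)=e^{-itH}\phi_-+\frac{1}{i}\int_{-\infty}^{t}e^{-i(t-\tau)H}F(u(\tau))\,d\tau ,
\]
and equip $Z_1$ with $\|u\|_{Z_1}=\sup_{t\in\mathbb R}\|u(t)\|+\|u\|_{Y_1}$. I will show that, for $R$ and $\delta$ small enough, $\Phi$ maps the closed ball $\{\,u\in Z_1:\|u\|_{Z_1}\le R\,\}$ into itself and is a contraction there; then (\ref{IE}) has a unique solution in that ball, a standard continuity/bootstrap argument in $t$ (covering $\mathbb R$ by intervals on which the contraction estimate is uniform and using that the $Y_1$-norm over $(-\infty,T]$ tends to $0$ as $T\to-\infty$) promotes this to uniqueness in all of $Z_1$, and membership in $C(\mathbb R;\mathcal H)$ follows from strong continuity of $\{e^{-itH}\}_{t\in\mathbb R}$ together with $F(u)\in L^1(\mathbb R;\mathcal H)$. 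So the construction reduces to two estimates: global-in-time Strichartz bounds for $e^{-itH}$, and a cubic estimate for $F$.

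For the Strichartz bounds, this is where hypothesis (\ref{RK}) is used. The condition $|Q_0|<\mu_0$ is a smallness condition on $V_0$ --- e.g.\ in the Rollnik norm, equivalently $\||V_0|^{1/2}(-\Delta)^{-1}|V_0|^{1/2}\|<1$ --- and it forces $H\ge0$ and excludes negative eigenvalues as well as a zero-energy eigenvalue or resonance. Since moreover $V_0$ has only a local $|x|^{-1}$-type singularity and decays exponentially (so $V_0\in L^1(\mathbb R^3)\cap L^p(\mathbb R^3)$ for every $p\in[1,3)$, in particular $V_0\in L^2(\mathbb R^3)$), the standard theory of Schr\"odinger operators with short-range potentials and no zero-energy obstruction yields that $e^{-itH}$ obeys the same global-in-time Strichartz estimates as the free group; alternatively this can be obtained by a Neumann-series perturbation off $e^{it\Delta}$, licensed by the smallness of $V_0$. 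Concretely I will use $\|e^{-itH}\psi\|_{Y_1}\lesssim\|\psi\|$ and the inhomogeneous bound $\bigl\|\int_{-\infty}^{t}e^{-i(t-\tau)H}G(\tau)\,d\tau\bigr\|_{Z_1}\lesssim\|G\|_{L^1(\mathbb R;\mathcal H)}$, the latter being immediate from the homogeneous estimate, Minkowski's inequality, and the unitarity (\ref{unitary}). The same hypothesis also yields existence and completeness of the wave operators for $(H,-\Delta)$, needed only to compare $e^{-itH}$ with $e^{-it\Delta}$ in (\ref{AB_1})--(\ref{AB_2}).

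The cubic estimate is
\[
\|F(u)-F(w)\|_{L^1(\mathbb R;\mathcal H)}\lesssim\bigl(\|u\|_{Y_1}^{2}+\|w\|_{Y_1}^{2}\bigr)\|u-w\|_{Y_1},
\qquad\text{in particular}\qquad
\|F(u)\|_{L^1(\mathbb R;\mathcal H)}\lesssim\|u\|_{Y_1}^{3}.
\]
Here the Yukawa form of $V_1$ is convenient: since $e^{-\mu_1 r}/r$ has only a local $|x|^{-1}$-singularity and decays exponentially, $V_1\in L^{3/2}(\mathbb R^3)$ (indeed $V_1\in L^1\cap L^p$ for all $p\in[1,3)$). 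Writing $F(u)-F(w)=-(V_1\ast|u|^2)(u-w)-(V_1\ast(|u|^2-|w|^2))w$ and using $\bigl||u|^2-|w|^2\bigr|\le(|u|+|w|)|u-w|$, I will estimate, at fixed $t$: H\"older in $x$ to peel off one factor of $u-w$ (resp.\ $w$), then Young's inequality for the convolution with $V_1\in L^{3/2}$, then H\"older in $x$ again on the remaining quadratic term; the Lebesgue exponents are dictated by $Y_1$ and chosen so that the output lies in $L^2(\mathbb R^3)$, yielding a pointwise-in-$t$ bound $\|(F(u)-F(w))(t)\|\lesssim\|V_1\|_{3/2}(\|u(t)\|_{r}^2+\|w(t)\|_{r}^2)\|(u-w)(t)\|_{r}$ with $r$ the spatial exponent of $Y_1$. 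Integrating in $t$ and applying H\"older in $t$ (three factors, matching the time exponent of $Y_1$) gives the estimate. Combined with the previous step, $\|\Phi u\|_{Z_1}\lesssim\|\phi_-\|+\|u\|_{Z_1}^{3}$ and $\|\Phi u-\Phi w\|_{Z_1}\lesssim(\|u\|_{Z_1}^{2}+\|w\|_{Z_1}^{2})\|u-w\|_{Z_1}$, so first fixing $R$ small and then $\delta$ small makes $\Phi$ a contraction on the $R$-ball, which closes the construction of $u$.

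It remains to extract the asymptotic states. Since $\|F(u)\|_{L^1(\mathbb R;\mathcal H)}<\infty$, the $\mathcal H$-valued integral $\int_{\mathbb R}e^{i\tau H}F(u(\tau))\,d\tau$ converges absolutely, so by unitarity (\ref{unitary}) and dominated convergence the tails $\int_{-\infty}^{t}$ and $\int_{t}^{\infty}$ of the Duhamel term tend to $0$ in $\mathcal H$ as $t\to-\infty$ and $t\to+\infty$ respectively. In view of (\ref{IE}) the first of these gives (\ref{AB_1}) (after replacing $e^{-itH}\phi_-$ by $e^{-it\Delta}\phi_-$ by means of the wave operators, if the limit is taken against the free evolution), while $\phi_+:=\phi_-+\frac{1}{i}\int_{\mathbb R}e^{i\tau H}F(u(\tau))\,d\tau$ (composed with the corresponding wave operator) is the unique element of $\mathcal H$ satisfying (\ref{AB_2}), uniqueness being immediate since the asymptotic evolution is isometric on $\mathcal H$. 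Hence $S_1$ is well defined. I expect the main difficulty to lie in the cubic estimate --- choosing the exponents of $Y_1$ so that Young's inequality, the two spatial H\"older inequalities and the temporal H\"older inequality all close against the single norm $\|\cdot\|_{Z_1}$, with the integrability of the Yukawa kernel supplying exactly the needed room --- and, to a lesser extent, in verifying the \emph{global}-in-time (not merely local) Strichartz estimates for $e^{-itH}$, which is precisely the point of (\ref{RK}).
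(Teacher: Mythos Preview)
Your proposal is correct and follows essentially the same route as the paper: a contraction-mapping argument for the Duhamel map in $Z_1$, based on global Strichartz estimates for $e^{-itH}$ (which the paper obtains from the Rodnianski--Schlag dispersive estimate under the Rollnik/Kato smallness equivalent to (\ref{RK})) together with the cubic bound $\|F(u)(t)\|\lesssim\|V_1\|_{3/2}\|u(t)\|_{r}^{3}$ via H\"older--Young, and then passage from $e^{-itH}$- to $e^{it\Delta}$-asymptotics through the wave operators, with the paper recording this as $S_1=\Omega_+^\ast S_F\Omega_-$. The only place where you are less explicit than the paper is the source of the global Strichartz estimate for $e^{-itH}$, but your identification of (\ref{RK}) as the smallness hypothesis that makes it go through is exactly right.
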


It is well-known that 
the wave operators 
\[
\Omega_\pm =
s-\lim_{t\to\pm\infty}
e^{itH}e^{it\Delta}:
{\mathcal H}\to P_{ac}({\mathcal H})
\]
and 
the inverse wave operators 
\[
\Omega_\pm^\ast =
s-\lim_{t\to\pm\infty}
e^{-it\Delta}e^{-itH}P_{ac}:
{\mathcal H}\to {\mathcal H}
\]
are well-defined 
(see Theorem XI.30 in \cite{RS3}). 
Here, 
$P_{ac}$ means the projection onto 
the absolutely continuous subspace of $H$. 
Under condition (\ref{RK}), 
then $P_{ac}$ becomes identity 
(see Section \ref{Appendix} below 
and the proof of Theorem XIII.21,(a) in \cite{RS4}). 
We define a mapping $S_{V_0}$ by 
\[
S_{V_0}=\Omega_+^\ast \Omega_- :
{\mathcal H}\to {\mathcal H} .
\]
The operator $S_{V_0}$ is 
the scattering operator for (\ref{DE}) 
with 
$J=-\Delta$ and 
$f(u)=-V_0u$.

Using the method of 
\cite{Watanabe1,Weder97,Weder00-2,Weder01-1,Weder01-2}, 
we see that $S_{V_0}$ 
can be determined 
from the knowledge of $S_1$. 
\begin{theorem}\label{thm2}
(\cite{Watanabe1,Weder97,Weder00-2,Weder01-1,Weder01-2}) 
Assume that (\ref{RK}) holds. 
For any $\phi\in {\mathcal H}\setminus \{ 0\}$, 
we have 
\begin{equation}\label{SV0}
\lim_{\varepsilon\to 0} 
\frac{1}{\varepsilon}{S_1}(\varepsilon \phi) 
=
S_{V_0}(\phi) 
\quad 
\text{in ${\mathcal H}$}.
\end{equation}
\end{theorem}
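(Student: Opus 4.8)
The plan is to extract the linear scattering operator $S_{V_0}$ as the small-amplitude limit of the nonlinear scattering operator $S_1$. The starting point is the integral equation (\ref{IE}) together with the asymptotic conditions (\ref{AB_1})--(\ref{AB_2}). First I would rescale: for $\varepsilon>0$ small and $\phi\in\mathcal H\setminus\{0\}$, write $u_\varepsilon$ for the solution associated with the datum $\varepsilon\phi$, and set $w_\varepsilon=\varepsilon^{-1}u_\varepsilon$. Dividing (\ref{IE}) by $\varepsilon$ and using the cubic homogeneity of $F$ (so that $F(\varepsilon w)=\varepsilon^3 F(w)$), one gets
\[
w_\varepsilon(t)=e^{-itH}\phi+\frac{\varepsilon^2}{i}\int_{-\infty}^{t}e^{-i(t-\tau)H}F(w_\varepsilon(\tau))\,d\tau .
\]
The uniform bound $\|w_\varepsilon\|_{Z_1}\le C\|\phi\|$ coming from the fixed-point construction in Theorem \ref{thm:direct} (the solution depends on the datum with a Lipschitz/linear bound on $B(\delta;\mathcal H)$) shows the integral term is $O(\varepsilon^2)$ in $C(\mathbb R;\mathcal H)\cap Y_1$, so $w_\varepsilon\to e^{-\cdot iH}\phi$ as $\varepsilon\to 0$, uniformly in $t$.

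Next I would pass to the limit in the scattering relation. By definition $\varepsilon^{-1}S_1(\varepsilon\phi)=\varepsilon^{-1}\phi_+^{(\varepsilon)}$, where $\phi_+^{(\varepsilon)}$ is characterized by $\|u_\varepsilon(t)-e^{-it\Delta}\phi_+^{(\varepsilon)}\|\to 0$ as $t\to+\infty$. Equivalently, using (\ref{IE}) evaluated in the limit $t\to+\infty$ and the inverse wave operator, $\varepsilon^{-1}\phi_+^{(\varepsilon)}=\Omega_+^\ast\big(e^{-itH}(\,\cdot\,)\big)$ applied to the appropriate state; more concretely,
\[
\frac{1}{\varepsilon}S_1(\varepsilon\phi)=\Omega_+^\ast\Big(\phi+\frac{\varepsilon^2}{i}\int_{-\infty}^{\infty}e^{i\tau H}F(w_\varepsilon(\tau))\,d\tau\Big),
\]
where the convergence of the improper integral in $\mathcal H$ uses the Strichartz/decay estimates that underlie the well-posedness (the same $Y_1$-estimate that makes the Duhamel term in (\ref{IE}) well-defined at $t=+\infty$). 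Since $\Omega_+^\ast$ is bounded and the integral term is $O(\varepsilon^2)$, this gives $\varepsilon^{-1}S_1(\varepsilon\phi)\to\Omega_+^\ast\phi$. Finally, identifying the free-datum description: the $t\to-\infty$ condition (\ref{AB_1}) for the rescaled limit $w_0(t)=e^{-itH}\phi$ reads $\|e^{-itH}\phi-e^{-it\Delta}\psi\|\to 0$, i.e. $\phi=\Omega_-\psi$ with $\psi$ the genuine incoming datum; composing, $\Omega_+^\ast\phi=\Omega_+^\ast\Omega_-\psi=S_{V_0}\psi$. Re-parametrizing by $\psi$ (which under (\ref{RK}) ranges over all of $\mathcal H$ since $\Omega_-$ is unitary onto $P_{ac}(\mathcal H)=\mathcal H$) yields (\ref{SV0}).

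The routine parts are the homogeneity rescaling and the $O(\varepsilon^2)$ bookkeeping; these follow verbatim from the estimates already used to prove Theorem \ref{thm:direct}, since that proof must control the Duhamel term in $Y_1\cap C(\mathbb R;\mathcal H)$ uniformly for data in $B(\delta;\mathcal H)$. The main obstacle I anticipate is the careful handling of the asymptotic identifications at $t=\pm\infty$: one must show that the improper time integral $\int_{\mathbb R}e^{i\tau H}F(w_\varepsilon(\tau))\,d\tau$ converges in $\mathcal H$ \emph{uniformly in $\varepsilon$} and tends to the corresponding integral with $w_0$, and that the two notions of incoming datum (the nonlinear one $\phi_-$ via $e^{-it\Delta}$ in (\ref{AB_1}) versus the state $\phi$ propagated by $e^{-itH}$) are matched correctly through $\Omega_-$ — in particular that no absolutely-continuous-subspace obstruction appears, which is exactly where condition (\ref{RK}) is invoked. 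Once the wave-operator dictionary is set up cleanly, the limit is immediate from boundedness of $\Omega_\pm^\ast$ and the smallness of the nonlinear correction.
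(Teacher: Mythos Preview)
The paper does not supply its own proof of Theorem~\ref{thm2}; the result is stated with attribution to \cite{Watanabe1,Weder97,Weder00-2,Weder01-1,Weder01-2} and then used as a black box. So there is no in-paper argument to compare against.

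Your approach is the standard one and is correct in substance. The quickest route uses the factorisation $S_1=\Omega_+^\ast S_F\Omega_-$ established in the proof of Theorem~\ref{thm:direct}: since $\Omega_\pm$ are linear isometries and, by (\ref{SF}) together with (\ref{K1}) and the cubic bound $\|F(u)\|_{(1,2)}\le C\|u\|_{Y_1}^3$, one has $\|S_F(\psi)-\psi\|\le C\|\psi\|^3$, whence
\[
\tfrac{1}{\varepsilon}S_1(\varepsilon\phi)
=\Omega_+^\ast\Bigl(\tfrac{1}{\varepsilon}S_F(\varepsilon\,\Omega_-\phi)\Bigr)
=\Omega_+^\ast\Omega_-\phi+O(\varepsilon^2)
\longrightarrow S_{V_0}\phi.
\]
Your write-up reaches the same endpoint but takes a detour that introduces a notational clash: you insert $\phi$ directly into (\ref{IE}) as the $e^{-itH}$-datum, obtain $\Omega_+^\ast\phi$ in the limit, and only afterwards re-parametrise by the ``genuine incoming datum'' $\psi$ with $\phi=\Omega_-\psi$. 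In the statement of the theorem $\phi$ is the free $e^{it\Delta}$-datum throughout, so the state appearing in the integral equation should be $\Omega_-\phi$ from the start (this is exactly how the proof of Theorem~\ref{thm:direct} builds the solution $v$). Once that is set up correctly the re-parametrisation step vanishes, and the ``main obstacle'' you anticipate---matching the two notions of incoming datum---is not an obstacle at all. One further simplification: you do not need the Duhamel integral to converge to the one with $w_0$; the $\varepsilon^2$ prefactor already annihilates it, so the uniform bound from (\ref{K1}) suffices.
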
 
Once we have determined $S_{V_0}$, 
we can reconstruct $V_0$, 
$e^{\pm itH}$, $\Omega_{\pm}$, 
$\Omega_{\pm}^\ast$  
by Enss--Weder \cite{Weder}.  
The remaining unknown numbers $Q_1$ and $\mu_1$ 
are determined by the following result:
\begin{theorem}\label{mainthm1}
Assume that (\ref{RK}) holds 
and that 
$\phi\in C^\infty_c(\mathbb{R}^3\setminus \{ 0\})$ 
satisfies $\phi\neq 0$ and 
$(\Delta^2 +1)^{-1}\phi\in 
C^\infty_c(\mathbb{R}^3\setminus \{ 0\})$.
\begin{enumerate}[(i)]
  \item 
We have  
\begin{align}\label{recon-1}
\frac{Q_1}{\mu_1^2}&=
\frac{\displaystyle{
\lim_{\lambda\to\infty}
i\lambda^4
\left\langle
(\Omega_+ S_1 \Omega_-^\ast -id)(\lambda^{-3}\phi_\lambda), 
\phi_\lambda
\right\rangle
}}{\displaystyle{
4\pi \| e^{it\Delta}\phi\|_{(4,4)}^4
}}.
\end{align}
  \item 
Suppose that $Q_1\neq 0$.
Put 
\begin{align*}
b&=
\left|
\frac{Q_1}{\mu_1^2}
\right|^{1/2},\qquad 
H(b)=
-\Delta -bQ_0\frac{\exp(-b\mu_0 r)}{r},\\
\Psi_1(\alpha)&=
\int_{\mathbb{R}}
\left\langle
\frac{\alpha\exp(-\sqrt{|\alpha|}r)}{r}\ast 
\left|
e^{-itH(b)}\phi
\right|^2, 
\left|
e^{-itH(b)}\phi
\right|^2
\right\rangle
dt,
\quad
\alpha\in\mathbb{R},\\
a&=
\lim_{\varepsilon\to 0}
i\varepsilon^{-3}b^{-7}
\left\langle
(\Omega_+ S_1 \Omega_-^\ast -id)(\varepsilon\phi_b), 
\phi_b
\right\rangle, \\
m_0&=
\max\left\{
m\in\mathbb{Z}_{\ge 0};
\Psi_1(m)\le |a|
\right\},\\
q_1&=
\max\left\{
q=0,1;
\Psi_1\left(
m_0+\frac{q}{2}
\right)
\le |a|
\right\},\\
q_{j+1}&=
\max\left\{
q=0,1;
\Psi_1\left(
m_0+\sum_{k=1}^j\frac{q_k}{2^k}+\frac{q}{2^{j+1}}
\right)
\le |a|
\right\},\quad j=1,2,\cdots .
\end{align*} 
Then we have 
\begin{align}\label{recon-2}
Q_1=\mathrm{sign}
\left( \frac{Q_1}{\mu_1^2}\right) 
\left(
m_0+\sum_{j=1}^\infty\frac{q_j}{2^j}
\right) .
\end{align}
\end{enumerate}
\end{theorem}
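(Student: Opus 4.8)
Set $\widetilde S:=\Omega_+S_1\Omega_-^\ast$; this is the scattering operator for \eqref{NLS} relative to the dynamics $e^{-itH}$ (i.e.\ $\widetilde S\psi_-=\psi_+$ where $u$ is the solution of \eqref{NLS} with $\|u(t)-e^{-itH}\psi_\pm\|\to0$ as $t\to\pm\infty$), and for it the only perturbation of the linear flow is the cubic nonlocal term $F$, with $F(u)=Q_1\bigl((\exp(-\mu_1r)/r)\ast|u|^2\bigr)u$. The argument rests on a small--amplitude expansion of $\widetilde S$. Writing the Duhamel formula in the $e^{-itH}$--picture and using the contraction estimates from the proof of Theorem~\ref{thm:direct}, for $\psi\in C^\infty_c(\mathbb R^3)$ and $\varepsilon$ small the solution $u_\varepsilon$ with $\|u_\varepsilon(t)-e^{-itH}(\varepsilon\psi)\|\to0$ as $t\to-\infty$ satisfies $u_\varepsilon(\tau)=e^{-i\tau H}(\varepsilon\psi)+w_\varepsilon(\tau)$ with $\|w_\varepsilon\|_{Z_1}=O(\varepsilon^3)$, and $(\widetilde S-id)(\varepsilon\psi)=\tfrac1i\int_{\mathbb R}e^{i\tau H}F(u_\varepsilon(\tau))\,d\tau$. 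Since $F$ is homogeneous of degree $3$, expanding in $\varepsilon$, pairing with $\psi$ and using $e^{i\tau H}=(e^{-i\tau H})^\ast$ gives
\begin{equation}\label{SAplan}
\lim_{\varepsilon\to0}i\varepsilon^{-3}\bigl\langle(\widetilde S-id)(\varepsilon\psi),\psi\bigr\rangle
=Q_1\int_{\mathbb R}\Bigl\langle\frac{\exp(-\mu_1r)}{r}\ast\bigl|e^{-i\tau H}\psi\bigr|^2,\ \bigl|e^{-i\tau H}\psi\bigr|^2\Bigr\rangle\,d\tau ;
\end{equation}
moreover the same expansion with $\psi=\phi_\lambda$, $\varepsilon=\lambda^{-3}$ gives
\begin{equation}\label{SAlambda}
i\lambda^4\bigl\langle(\widetilde S-id)(\lambda^{-3}\phi_\lambda),\phi_\lambda\bigr\rangle
=\lambda^{-5}Q_1\int_{\mathbb R}\Bigl\langle\frac{\exp(-\mu_1r)}{r}\ast\bigl|e^{-i\tau H}\phi_\lambda\bigr|^2,\ \bigl|e^{-i\tau H}\phi_\lambda\bigr|^2\Bigr\rangle\,d\tau+o(1),
\end{equation}
the $o(1)$ being justified because the Strichartz norms controlling the cubic remainder scale as fixed powers of $\lambda$ under the dilation, with exponents arranged so that, after pairing with $\phi_\lambda$ and multiplication by $i\lambda^4$, the remainder tends to $0$.

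To evaluate \eqref{SAlambda} I would use two scalings. First, setting $H(\lambda):=-\Delta-\lambda Q_0\exp(-\lambda\mu_0r)/r$, a direct computation gives $e^{-i\tau H}\phi_\lambda(x)=\bigl(e^{-i\tau\lambda^{-2}H(\lambda)}\phi\bigr)(x/\lambda)$; since $\|\lambda Q_0\exp(-\lambda\mu_0r)/r\|_{L^p(\mathbb R^3)}\to0$ for $1\le p<3/2$, $H(\lambda)$ converges to $-\Delta$, and by Strichartz estimates $e^{-i\tau\lambda^{-2}H(\lambda)}\phi$ differs from $e^{i\tau\lambda^{-2}\Delta}\phi$ by a quantity that is negligible, uniformly and integrably in the rescaled time $s:=\tau\lambda^{-2}$, under the stated hypotheses on $\phi$. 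Second, for a function $g$ and $g_\lambda(x):=g(x/\lambda)$, the change of variables $x\mapsto\lambda x$ together with the bound $\bigl|\,4\pi/(|\xi|^2+\mu^2)-4\pi/\mu^2\,\bigr|\le4\pi|\xi|^2/\mu^4$ for the Fourier multiplier of $\exp(-\mu r)/r$ (recall $\widehat{\exp(-\mu r)/r}(\xi)=4\pi/(|\xi|^2+\mu^2)$) yield
\[
\bigl\langle(\exp(-\mu_1r)/r)\ast g_\lambda,\ g_\lambda\bigr\rangle=\frac{4\pi\lambda^3}{\mu_1^2}\,\|g\|_2^2+O\bigl(\lambda\,\|\Delta g\|_2\,\|g\|_2\bigr).
\]
Inserting $g=g_s:=\bigl|e^{-is\Delta}\phi\bigr|^2$ into \eqref{SAlambda} via the first scaling and substituting $\tau=\lambda^2s$, the main term becomes $\lambda^{-5}Q_1\cdot\lambda^2\cdot\tfrac{4\pi\lambda^3}{\mu_1^2}\int_{\mathbb R}\|g_s\|_2^2\,ds=\tfrac{4\pi Q_1}{\mu_1^2}\int_{\mathbb R}\|e^{-is\Delta}\phi\|_4^4\,ds=\tfrac{4\pi Q_1}{\mu_1^2}\|e^{it\Delta}\phi\|_{(4,4)}^4$, while the error term contributes $O\bigl(\lambda^{-2}\int_{\mathbb R}\|\Delta g_s\|_2\|g_s\|_2\,ds\bigr)=o(1)$; combined with the (also $o(1)$) error from replacing $e^{-i\tau H}$ by the free flow, this proves \eqref{recon-1} (note $\|e^{it\Delta}\phi\|_{(4,4)}\neq0$ since $\phi\neq0$).

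For (ii) the point is a genuine finite rescaling that converts the unknown $\mu_1$ into a known exponent. With $b=|Q_1/\mu_1^2|^{1/2}$, the function $v(t,x):=u(b^2t,bx)$ solves \eqref{NLS} with the parameters $(Q_0,\mu_0,Q_1,\mu_1)$ replaced by $(bQ_0,b\mu_0,Q_1b^4,b\mu_1)$ — so $H$ is replaced by $H(b)$ and $F$ by $\widetilde F(v)=Q_1b^4\bigl((\exp(-b\mu_1r)/r)\ast|v|^2\bigr)v$ — and condition \eqref{RK} is preserved; moreover the dilation $R_c\colon f\mapsto f(\cdot/c)$ intertwines the corresponding scattering operators, $\widetilde S_v=R_{1/b}\widetilde S R_b$, whence $\bigl\langle(\widetilde S_v-id)(\varepsilon\phi),\phi\bigr\rangle=b^{-3}\bigl\langle(\widetilde S-id)(\varepsilon\phi_b),\phi_b\bigr\rangle$, and so $\lim_{\varepsilon\to0}i\varepsilon^{-3}\langle(\widetilde S_v-id)(\varepsilon\phi),\phi\rangle=b^{-3}(b^{7}a)=b^{4}a$ by the definition of $a$. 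On the other hand \eqref{SAplan} applied to the $v$--equation, together with $\sqrt{|Q_1|}=b\mu_1$ (i.e.\ $b^2=|Q_1|/\mu_1^2$), identifies the same limit with $b^4\Psi_1(Q_1)$. Comparing gives $\Psi_1(Q_1)=a$, hence $\Psi_1(|Q_1|)=|a|$ because $\Psi_1(-c)=-\Psi_1(c)$ and $\Psi_1(c)\ge0$ for $c>0$. Finally, Plancherel and $\widehat{\exp(-\mu r)/r}(\xi)=4\pi/(|\xi|^2+\mu^2)$ show that $\Psi_1(\alpha)$ is a fixed positive multiple of $\int_{\mathbb R}\int_{\mathbb R^3}\bigl(1+|\xi|^2/\alpha\bigr)^{-1}\bigl|\widehat{(\,|e^{-itH(b)}\phi|^2\,)}(\xi)\bigr|^2\,d\xi\,dt$ for $\alpha>0$, so $\Psi_1$ is continuous and strictly increasing on $[0,\infty)$ with $\Psi_1(0)=0$ (strict monotonicity because $|e^{-itH(b)}\phi|^2\not\equiv0$ and $\alpha\mapsto(1+|\xi|^2/\alpha)^{-1}$ is strictly increasing for $\xi\neq0$). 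Thus $|Q_1|$ is the unique non-negative solution of $\Psi_1(\alpha)=|a|$, and the greedy dyadic procedure defining $m_0$ and the digits $q_j$ reconstructs exactly $|Q_1|=m_0+\sum_{j\ge1}q_j2^{-j}$; since $\mu_1^2>0$, $Q_1=\mathrm{sign}(Q_1/\mu_1^2)\,|Q_1|$, which is \eqref{recon-2}.

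The main obstacle is making the concentration limit in (i) rigorous: one must show, with control uniform in $\lambda$, both that the cubic remainder in \eqref{SAlambda} is $o(1)$ after pairing with $\phi_\lambda$ and multiplying by $i\lambda^4$, and that the replacement $e^{-i\tau H}\phi_\lambda\rightsquigarrow(e^{i\tau\lambda^{-2}\Delta}\phi)(\cdot/\lambda)$ together with the concentration $\lambda^2\exp(-\mu_1\lambda r)/r\rightsquigarrow4\pi\mu_1^{-2}\delta_0$ introduces only negligible errors. This forces one to track precisely how the norm of $Y_1=L^3(\mathbb R;L^{18/7}(\mathbb R^3))$ from Theorem~\ref{thm:direct}, that of $L^4(\mathbb R\times\mathbb R^3)$, and certain $H^2$--type norms of $|e^{-is\Delta}\phi|^2$ transform under $x\mapsto x/\lambda$, $t\mapsto t/\lambda^2$ and the amplitude factor $\lambda^{-3}$, and it is precisely for the finiteness of the resulting time integrals that the hypotheses that $\phi$ and $(\Delta^2+1)^{-1}\phi$ be smooth, compactly supported, and supported away from the origin are imposed. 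Once \eqref{SAplan} and the Fourier representation of $\Psi_1$ are available, part (ii) is comparatively routine.
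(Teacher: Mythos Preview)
Your overall plan is right, and part (ii) is essentially the paper's argument (the paper obtains $a=b^{-7}K[\phi_b]=\Psi_1(Q_1)$ by direct scaling of the integral rather than via an intertwining of scattering operators, but the content is identical, including the Plancherel monotonicity argument for $\Psi_1$). Part (i), however, is organized differently from the paper, and the difference matters.

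The paper does \emph{not} first replace $e^{-isH(\lambda)}$ by $e^{is\Delta}$ and then concentrate the Yukawa kernel; it handles both at once. After the scaling you indicate, the main term is written as
\[
(I)_\lambda=\int_{\mathbb R^3}Q_1\,\frac{e^{-\mu_1|y|}}{|y|}\,\Phi(\lambda,y)\,dy,\qquad
\Phi(\lambda,y)=\int_{\mathbb R^{1+3}}\bigl|e^{-itH(\lambda,y)}\tau_{\lambda^{-1}y}\phi\bigr|^2\bigl|e^{-itH(\lambda)}\phi\bigr|^2\,d(t,x),
\]
and the key lemma is that $\Phi(\lambda,y)\to\|e^{it\Delta}\phi\|_{(4,4)}^4$ pointwise in $y$, together with a uniform bound $|\Phi(\lambda,y)|\le C(\phi)$. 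Dominated convergence in $y$ and $\int e^{-\mu_1|y|}/|y|\,dy=4\pi/\mu_1^2$ then give \eqref{recon-1}. The lemma rests on two propagator facts proved separately: a resolvent--convergence argument (Stone--Weierstrass plus functional calculus) yielding $\bigl\|(e^{-itH(\lambda,y)}-e^{it\Delta})(\Delta^2+1)\psi\bigr\|\to0$ for $\psi\in C_c^\infty(\mathbb R^3\setminus\{0\})$, and a uniform-in-$\lambda$ dispersive bound $\|e^{-itH(\lambda)}\phi\|_6\le C(\phi)\langle t\rangle^{-1}$ obtained by combining an energy inequality with the Rodnianski--Schlag $L^1\to L^\infty$ decay. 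These are precisely where the hypotheses $\phi,(\Delta^2+1)^{-1}\phi\in C_c^\infty(\mathbb R^3\setminus\{0\})$ and \eqref{RK} enter.

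Your route---replace $e^{-isH(\lambda)}$ by $e^{is\Delta}$ first, then expand the multiplier $4\pi/(|\xi|^2+\mu_1^2)$ at low frequency---is a legitimate alternative for the \emph{second} step, but the first step is where the real difficulty sits, and ``by Strichartz estimates \ldots negligible, uniformly and integrably in $s$'' is not yet a proof. A Duhamel comparison of $e^{-isH(\lambda)}\phi$ with $e^{is\Delta}\phi$ in a Strichartz norm does not automatically yield a bound that simultaneously tends to $0$ as $\lambda\to\infty$ and is dominated by an $s$-integrable function independent of $\lambda$; the paper in fact avoids Strichartz here and uses pointwise-in-$t$ $L^2$ convergence together with the uniform $L^6$ decay to run dominated convergence in $t$. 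Moreover, if you apply your Fourier expansion \emph{before} replacing $H(\lambda)$ by $-\Delta$, the error term involves $\|\Delta|e^{-isH(\lambda)}\phi|^2\|_2$, whose uniform-in-$\lambda$ control with $s$-decay is not obvious. Either way you need exactly the paper's two propagator estimates (or equivalents); once those are in hand, your low-frequency expansion is a clean substitute for the paper's $y$-integration, but the hard analytic core is the same.
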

\begin{rem}
We suppose that $V_0\equiv 0$.
Following the proof of 
Proposition 6 in \cite{Sasaki-Watanabe}, 
we can easily show another 
formula for determining $Q_1/\mu_1^2$ 
\begin{equation}\label{old}
\frac{Q_1}{\mu_1^2}=
\frac{\displaystyle{
\lim_{\lambda\to\infty}
\lambda^{-5}
\lim_{\varepsilon\to 0}
\frac{i}{\varepsilon^3}
\Big\langle 
({S_1} -id)
(\varepsilon \phi_\lambda ), 
\phi_\lambda 
\Big\rangle
}}
{4\pi\| e^{it\Delta} \phi \|_{(4,4)}^4}.
\end{equation} 
Remark that the formula (\ref{recon-1}) is  
simpler than (\ref{old}).
\end{rem}
The contents of this paper 
is as follows: 
In Section \ref{sec:direct}, 
we show Theorem \ref{thm:direct}. 
For this purpose, 
we introduce the $L^p$-$L^q$ estimate 
for solutions to 
the linear Schr\"{o}dinger equation 
\begin{equation}\label{LS}
i\partial_t u +
\Delta u +
Q_0 \frac{\exp(-\mu_0 r)}{r}u
 =0 
\end{equation}
given by Rodnianski--Schlag \cite{Rodnianski-Schlag}.
From the $L^p$-$L^q$ estimate, 
we show that we can treat (\ref{NLS}) 
as the nonlinear Schr\"{o}dinger equation 
\begin{equation}\label{NLS-2}
i\partial_t u +
\Delta u - 
\biggl(
Q_1 \frac{\exp(-\mu_1 r)}{r} \ast |u|^2
\biggr) u =0 
\end{equation} 
wherever we consider only 
the direct scattering problem. 

In Section \ref{SchOp}, 
we consider some properties of 
the Schr\"{o}dinger propagator 
$e^{-itH(\lambda)}$. 
Here, $H(\lambda)$ is a self-adjoint operator 
on $\mathcal{H}$ defined by 
\[
H(\lambda)
=
-\Delta
+
\lambda^2 (V_0)_{\lambda^{-1}} .
\]
We prove that $e^{-itH(\lambda)}$ 
satisfies the following properties:
\begin{itemize}
  \item $\lim_{\lambda\to\infty}e^{-itH(\lambda)}\phi
=e^{it\Delta}\phi$ 
for some $\phi$.
  \item $\left\| e^{-itH(\lambda)} \right\|_6$ 
is bounded with respect to $\lambda>0$.
\end{itemize}
These properties will be used 
to show Theorem \ref{mainthm1}
in Section \ref{sec:Pr. of main 1}.  

In Section \ref{sec:application}, 
we apply Theorem \ref{mainthm1} 
to the inverse scattering problem for 
the semi-relativistic Hartree equation 
\begin{equation}\label{SRH-pre}
\Bigl(
i\partial_t +\sqrt{1-\Delta}
\Bigr)
w +
\biggl(
Q_2 \frac{\exp(-\mu_2 r)}{r} \ast |w|^2
\biggr) w =0
,\quad 
(t,x)\in {\mathbb R}^{1+3}.
\end{equation}
The existence of the scattering operator 
can be shown  
by applying 
the endpoint Strichartz estimate 
for the Klein-Gordon equation 
by Machihara--Nakanishi--Ozawa 
\cite{Machihara-Nakanishi-Ozawa}. 
We prove that 
$\mu_2$ and $Q_2$ 
can be determined via the formulas 
(\ref{recon-3}) and (\ref{recon-4}) below. 
The base of the proof is 
the following limit: 
\begin{equation}\label{pre-NRL}
\lim_{\lambda\to\infty}
\left\| e^{it\lambda^2 -it\lambda\sqrt{\lambda^2-\Delta}}\phi - 
   e^{i\frac{1}{2}\Delta}\phi \right\|_{(4,4)}=0.
\end{equation} 
The functions 
$e^{-it\lambda\sqrt{\lambda^2-\Delta}}\phi$ 
and 
$e^{i\frac{1}{2}\Delta}\phi$ 
are solutions to 
the free semi-relativistic equation 
and 
the free Schr\"{o}dinger equation, 
respectively. 
Thus, 
the limit (\ref{pre-NRL}) is one of 
the non-relativistic limit.  
\section{Direct Problem}\label{sec:direct}
In this section, 
we first prepare the Key properties 
to show Theorems 
\ref{thm:direct} and \ref{mainthm1}. 

For a measurable function 
$V:\mathbb{R}^3\to\mathbb{C}$, 
we set 
\begin{align*}
\| V\|_{R} &=
\sqrt{
\int_{\mathbb{R}^{3+3}}
\frac{|V(x)V(y)|}{|x-y|^2}d(x,y)},\\
\| V\|_{\mathcal K} &=
\sup_{x\in\mathbb{R}^3}
\int_{\mathbb{R}^3}
\frac{|V(y)|}{|x-y|}dy .
\end{align*}
The norm  
$\|\cdot\|_R$ is called 
the Rollnik norm.
Remark that 
the condition (\ref{RK}) is equivalent to 
\begin{align}\label{RK-e}
\frac{|Q_0|}{\mu_0}<
4\pi \min
\Biggl\{
\biggl\| \dfrac{e^{-r}}{r} \biggl\|_R^{-1}, 
\biggl\| \dfrac{e^{-r}}{r} \biggl\|_{\mathcal K}^{-1}
\Biggr\}.
\end{align}
For the detail, see Section \ref{Appendix} below.

Under some suitable condition of $V$, 
we obtain the following 
the time-decay estimate of $e^{-it(-\Delta+V)}$: 
\begin{prop}\label{prop:decay}
(\cite{Rodnianski-Schlag}) 
Suppose that a measurable function 
$V:\mathbb{R}^3\to \mathbb{R}$ 
satisfies 
\begin{equation}\label{RK2}
\max\Bigl\{
\| V \|_R , 
\| V \|_{\mathcal K}
\Bigr\} 
<4\pi .
\end{equation}
Then we have 
\begin{equation}\label{decay}
\left\| e^{-it(-\Delta+V)}\phi \right\|_\infty \le C
|t|^{-\frac{3}{2}}\| \phi\|_1
\end{equation}
for all 
$\phi\in L^1(\mathbb{R}^3)$ and $t\neq 0$.
\end{prop}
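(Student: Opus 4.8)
The plan is to follow Rodnianski--Schlag and derive (\ref{decay}) from a spectral (resolvent) representation of the propagator together with a convergent Born expansion, in which the two smallness conditions in (\ref{RK2}) play complementary roles: the Rollnik bound guarantees the absence of bound states and of a zero energy resonance, so that the expansion is well posed, while the Kato bound makes the resulting series geometrically summable. I would begin by fixing the spectral setup. Write $H=-\Delta+V$ and let $R_0(z)=(-\Delta-z)^{-1}$, $R_V(z)=(H-z)^{-1}$ denote the free and perturbed resolvents, with boundary values $R^\pm_0(\lambda)=R_0(\lambda^2\pm i0)$ and $R^\pm_V(\lambda)$ on the rims of $[0,\infty)$. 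The free resolvent has the explicit kernel $R^+_0(\lambda)(x,y)=e^{i\lambda|x-y|}/(4\pi|x-y|)$. Setting $v=|V|^{1/2}$ and $w=\mathrm{sign}(V)\,|V|^{1/2}$, this kernel gives, uniformly in $\lambda\ge0$,
\[
\big\| v R^\pm_0(\lambda) v \big\|_{\mathrm{HS}}^2
=
\int_{\mathbb{R}^{3+3}}
\frac{|V(x)|\,|V(y)|}{(4\pi)^2|x-y|^2}\,d(x,y)
=
\Big(\frac{\|V\|_R}{4\pi}\Big)^2 ,
\]
so by (\ref{RK2}) the Birman--Schwinger operator has Hilbert--Schmidt norm strictly below $1$, uniformly in $\lambda$ (including the threshold $\lambda\to0$). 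Hence $H$ has no eigenvalues and no zero energy resonance, $P_{ac}(H)=\mathrm{id}$, and Stone's formula yields
\[
e^{-itH}\phi
=
\frac{1}{\pi i}\int_0^\infty
e^{-it\lambda^2}\,\lambda\,
\big[R^+_V(\lambda)-R^-_V(\lambda)\big]\phi\;d\lambda .
\]

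Next I would expand $R_V$ through the symmetrized resolvent identity
\[
R_V(z)=R_0(z)-R_0(z)\,v\,\big(I+wR_0(z)v\big)^{-1}\,w\,R_0(z).
\]
Because $\|wR^\pm_0(\lambda)v\|_{\mathrm{HS}}\le\|V\|_R/(4\pi)<1$ uniformly, the middle factor $M^\pm(\lambda)=I+wR^\pm_0(\lambda)v$ is invertible with $\|M^\pm(\lambda)^{-1}\|$ bounded uniformly in $\lambda$, and its Neumann series produces the Born expansion
\[
R^\pm_V(\lambda)
=
\sum_{n=0}^\infty (-1)^n\,R^\pm_0(\lambda)\big(V R^\pm_0(\lambda)\big)^n .
\]
Inserting this into the Stone representation writes $e^{-itH}$ as a sum of terms whose $n$-th entry, as an operator from $L^1$ to $L^\infty$, has an oscillatory kernel obtained by threading the explicit free kernels through $n$ copies of $V$: a $\lambda$-integral with total phase $\lambda\,\rho$, where $\rho=|x-z_1|+\cdots+|z_n-y|$ is the length of the scattering path $x\to z_1\to\cdots\to z_n\to y$, tested against the spatial weights $\prod_j (4\pi|z_{j-1}-z_j|)^{-1}|V(z_j)|$.

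The main obstacle is the uniform estimation of these Born terms. The zeroth term is exactly the free propagator, for which $\|e^{it\Delta}\phi\|_\infty\le(4\pi|t|)^{-3/2}\|\phi\|_1$ supplies both the constant and the decay, the $|t|^{-3/2}$ coming from the Gaussian integral $\int_0^\infty e^{-it\lambda^2}\lambda^2\,d\lambda\sim|t|^{-3/2}$. For $n\ge1$ I would isolate one factor of $\lambda$, together with the spectral weight $\lambda\,d\lambda$, to run the same stationary phase / Van der Corput argument in $\lambda$, obtaining the $|t|^{-3/2}$ decay \emph{uniformly in the path length $\rho$}, while the remaining sine kernels are bounded pointwise by $|\sin(\lambda r)/(\lambda r)|\le1$, so that each intermediate integration is controlled by the Kato norm,
\[
\sup_{z}\int_{\mathbb{R}^3}\frac{|V(z')|}{4\pi|z-z'|}\,dz'
=
\frac{\|V\|_{\mathcal K}}{4\pi}.
\]
This gives a bound of the form $C\,|t|^{-3/2}\,(\|V\|_{\mathcal K}/(4\pi))^n\,\|\phi\|_1$ for the $n$-th term. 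The genuinely delicate points, where the hypotheses are indispensable, are (i) the low frequency region $\lambda\to0$, where $R^\pm_0(\lambda)$ is singular and only the absence of a zero energy resonance, i.e. the Rollnik smallness, keeps $M^\pm(\lambda)^{-1}$—and hence the whole expansion—bounded; and (ii) arranging the oscillatory estimate so that its implicit constant is independent of $n$, so that no growth offsets the geometric gain from the Kato norm.

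Finally I would sum. Since (\ref{RK2}) forces $\|V\|_{\mathcal K}/(4\pi)<1$, the series $\sum_n(\|V\|_{\mathcal K}/(4\pi))^n$ converges, and the tail of the expansion—carrying the full factor $M^\pm(\lambda)^{-1}$ rather than a finite Neumann truncation—is absorbed using the uniform operator bound on $M^\pm(\lambda)^{-1}$ together with the same oscillatory estimate. Collecting the contributions yields
\[
\big\|e^{-itH}\phi\big\|_\infty
\le
C\,|t|^{-3/2}\,\|\phi\|_1 ,
\qquad t\ne0,
\]
with $C$ depending only on $\|V\|_R$ and $\|V\|_{\mathcal K}$, which is precisely (\ref{decay}).
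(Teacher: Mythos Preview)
The paper does not prove this proposition at all: it is stated with an attribution to Rodnianski--Schlag \cite{Rodnianski-Schlag} and used as a black box, so there is no ``paper's own proof'' to compare against. Your outline is a faithful sketch of the Rodnianski--Schlag argument itself---Stone's formula, Born expansion of the perturbed resolvent, Rollnik smallness to exclude threshold obstructions and justify invertibility of $I+wR_0^\pm(\lambda)v$ uniformly in $\lambda$, and Kato smallness to make the resulting series geometric---so in that sense you have supplied strictly more than the paper does.

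One caution if you intend this as more than a plan: the step you flag as ``genuinely delicate,'' namely extracting the $|t|^{-3/2}$ decay from the $n$-th Born term with a constant independent of $n$ and of the path length $\rho$, is precisely where the technical work in \cite{Rodnianski-Schlag} lies. Simply bounding the sine kernels by $1$ and appealing to Van der Corput is not quite enough; one has to organize the oscillatory integral carefully (in particular, handle the derivative in $\lambda$ that stationary phase produces when it hits the iterated free kernels) so that the resulting spatial integrals are still dominated by powers of $\|V\|_{\mathcal K}/(4\pi)$. Your sketch names this obstacle correctly but does not resolve it; for the purposes of the present paper, which only quotes the result, that is fine.
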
 
Assume that $V$ satisfies (\ref{RK2}). 
By (\ref{unitary}) and (\ref{decay}), 
it follows from 
the Riesz--Thorin interpolation theorem that 
we obtain the $L^p$-$L^q$ estimate 
\begin{equation}\label{LpLq0}
\| e^{-it(-\Delta+V)}\phi\|_p \le C
|t|^{-\frac{3}{2}}\| \phi\|_q
\end{equation}
for all 
$1\le q\le 2\le p\le \infty$ with 
$1/p+1/p=1$,  
$\phi\in L^q(\mathbb{R}^3)$ and $t\neq 0$. 
It is shown by Ginibre--Velo 
\cite{Ginibre-Velo85-2} that 
via $T^\ast T$ argument, 
(\ref{LpLq0}) gives rise to 
the class of the Strichartz type estimates.
\begin{prop}\label{prop:st}
Assume that $V$ satisfies (\ref{RK2}). 
Let $q_j>2$, 
$2<\beta_j<6$, 
$j=1,2,3$. 
If $2/q_j=3/2-3/\beta_j$, 
$j=1,2,3$, then we have 
\begin{align}
\bigl\| 
e^{\pm it(-\Delta +V)}\phi
\bigl\|_{(q_1,\beta_1)}
&\le C
\| \phi\| \label{st1},\\
\biggl\|
\int^{t}_{-\infty}
e^{\pm i(t-\tau)(-\Delta +V)}f(\tau)d\tau
\biggl\|_{(q_2,\beta_2)}
&\le C
\| f\|_{(q_3^\prime,\beta_3^\prime)}. \label{st2}
\end{align}
Here, $q_3^\prime$ and $\beta_3^\prime$ denote 
the H\"{o}lder conjugate of $q_3$ and $\beta_3$, 
respectively. 
\end{prop}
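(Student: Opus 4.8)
The plan is to run the classical duality / $TT^\ast$ argument of Ginibre--Velo \cite{Ginibre-Velo85-2}, which yields exactly the non-endpoint Strichartz bounds from the two structural facts already available. Write $U(t)=e^{-it(-\Delta+V)}$ (the case of the $+$ sign is identical). The two ingredients are the energy bound $\|U(t)\phi\|\le\|\phi\|$ from (\ref{unitary}) and the dispersive bound (\ref{decay}); interpolating them by Riesz--Thorin gives, for every admissible pair with $2/q=3/2-3/\beta$, the sharp decay
\begin{equation*}
\|U(t)\phi\|_{\beta}\le C|t|^{-(3/2-3/\beta)}\|\phi\|_{\beta'}=C|t|^{-2/q}\|\phi\|_{\beta'},\quad t\neq0.
\end{equation*}
This is the only analytic input; everything below is abstract functional analysis.

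First I would establish the homogeneous estimate (\ref{st1}). Set $T\phi=U(\cdot)\phi$, viewed as a map $\mathcal H\to L^{q_1}(\mathbb R;L^{\beta_1})$; since $U$ is a unitary group, $T^\ast g=\int_{\mathbb R}U(-s)g(s)\,ds$, so that $TT^\ast g(t)=\int_{\mathbb R}U(t-s)g(s)\,ds$. Because the source $\mathcal H$ is a Hilbert space, the $TT^\ast$ identity gives $\|T\|^2=\|TT^\ast\|_{L^{q_1'}L^{\beta_1'}\to L^{q_1}L^{\beta_1}}$, so it suffices to bound $TT^\ast$. Applying the interpolated decay in the space variable for each fixed $t$ and then taking the $L^{q_1}_t$ norm reduces matters to the one-dimensional convolution inequality
\begin{equation*}
\Bigl\||t|^{-2/q_1}\ast\|g(\cdot)\|_{\beta_1'}\Bigr\|_{L^{q_1}(\mathbb R)}\le C\bigl\|\,\|g(\cdot)\|_{\beta_1'}\bigr\|_{L^{q_1'}(\mathbb R)}.
\end{equation*}
Since $|t|^{-2/q_1}\in L^{q_1/2,\infty}(\mathbb R)$, this is precisely the weak Young (Hardy--Littlewood--Sobolev) inequality, whose exponent conditions $1<q_1/2<\infty$ and $1<q_1'<q_1<\infty$ hold exactly because $q_1>2$ and $q_1<\infty$, that is $2<\beta_1<6$. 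This is where the exclusion of both endpoints enters.

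Next I would deduce the inhomogeneous estimate (\ref{st2}) in two steps. For the non-retarded operator $\mathcal A f(t)=\int_{\mathbb R}U(t-s)f(s)\,ds$, I would write $\mathcal A=T_2T_3^\ast$, where $T_j\phi=U(\cdot)\phi:\mathcal H\to L^{q_j}L^{\beta_j}$; boundedness of $T_2$ and $T_3$ from (\ref{st1}) then yields $\|\mathcal A\|_{L^{q_3'}L^{\beta_3'}\to L^{q_2}L^{\beta_2}}\le\|T_2\|\,\|T_3\|<\infty$. To pass to the retarded integral $\int_{-\infty}^{t}$ appearing in (\ref{st2}), I would invoke the Christ--Kiselev lemma, which upgrades boundedness of $\mathcal A$ to boundedness of its causal truncation whenever the input time-exponent is strictly smaller than the output one; here $q_3'<2<q_2$, so the hypothesis holds and (\ref{st2}) follows.

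The point to watch is the bookkeeping of exponents rather than any deep difficulty: one must verify that the Hardy--Littlewood--Sobolev range $0<2/q_j<1$ and the strict ordering $q_3'<q_2$ are forced by $q_j>2$ and $2<\beta_j<6$, which they are. The genuinely hard endpoint case $q=2$, $\beta=6$ — which would require the Keel--Tao machinery — is excluded by hypothesis, so no serious obstacle arises.
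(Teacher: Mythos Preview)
Your proposal is correct and follows exactly the route the paper indicates: the paper gives no detailed proof but simply remarks, just before the proposition, that the Strichartz estimates follow from the $L^p$--$L^q$ bound (\ref{LpLq0}) via the $T^\ast T$ argument of Ginibre--Velo \cite{Ginibre-Velo85-2}. Your write-up is a faithful expansion of that citation; the only minor embellishment is the appeal to the Christ--Kiselev lemma for the retarded integral, a later but standard substitute for the direct Hardy--Littlewood--Sobolev treatment in \cite{Ginibre-Velo85-2}.
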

We are ready to show Theorem \ref{thm:direct}. 
\begin{proof}[Proof of Theorem \ref{thm:direct}]
Put 
\[
F(u)=\biggl(
Q_1 \frac{\exp(-\mu_1 r)}{r} \ast |u|^2
\biggr) u
\]
and 
\[
\Psi[u](t)=
e^{-itH}\phi_- 
+
\frac{1}{i}\int^{t}_{-\infty}
e^{-i(t-\tau)H}F(u(\tau))d\tau .
\]
By (\ref{RK-e}), we have 
\[
\max\Bigl\{
\| V_0 \|_R , 
\| V_0 \|_{\mathcal K}
\Bigr\} 
<4\pi .
\] 
Therefore, from (\ref{st1}) and (\ref{st2}), 
we obtain 
\[
\| \Psi[u]\|_{Z_1} \le C 
\Bigl(
\| \phi_-\| +
\| F(u)\|_{(1,2)}
\Bigr) .
\]
Since $V_0\in L^{3/2}(\mathbb{R}^3)$, 
we see that 
\[
\| F(u)(t)\| \le 
\bigl\| V_0 \ast |u(t)|^2 \bigl\| \| u(t)\|_{18/7} 
\le 
\| V_0\|_{3/2} \| u(t)\|_{18/7}^3,
\]
where we have used 
the H\"{o}lder--Young inequality 
in the second inequality. 
Hence we see that 
\[
\| \Psi[u]\|_{Z_1} 
\le C 
\Bigl(
\| \phi_-\| +
\| u\|_{Z_1}^3
\Bigr) .
\]
Similarly, we  obtain 
\[
\| \Psi[u]-\Psi[\tilde{u}]\|_{Z_1} 
\le C 
\| u-\tilde{u}\|_{Z_1}
\Bigl(
\| u\|_{Z_1} + \| \tilde{u}\|_{Z_1}
\Bigr)^2 .
\]
It is clear that 
$\Psi[u]\in C(\mathbb{R};{\mathcal H})$. 
Therefore, we see that 
there uniquely exists 
$u\in Z_1$ such that 
$\Psi[u]=u$ for sufficiently small $\delta>0$. 
We can immediately find that 
the fixed point $u$ solves the equation (\ref{NLS}).
Furthermore, we obtain  
\begin{align}
\| u\|_{Z_1} &\le C\| \phi_-\| ,\label{K1}\\
\| u-e^{-itH}\phi_-\|_{Z_1} 
&\le C \| \phi_- \|^3 .\label{K2}
\end{align}
It follows 
from $u\in L^3(\mathbb{R};L^{18/7}(\mathbb{R}^n))$
that 
\begin{align*}
\| u(t)-e^{-itH}\phi_-\|
&\le 
\int^{\infty}_{t}
\| F(u)(t)\| dt \\
&\le 
\int^{t}_{-\infty}
\| V_0\|_{3/2} 
\| u(t)\|_{18/7}^3 dt \\
&\to 0 
\quad\text{as $t\to \infty$}.
\end{align*} 
Furthermore, 
if we put 
\begin{equation}\label{SF}
S_F(\phi_-)=\phi_- +
\frac{1}{i}\int_{\mathbb{R}} 
e^{itH}(F(u(t)))dt ,
\end{equation}
we have 
\begin{align*}
\lim_{t\to +\infty}
\| u(t)-e^{-itH}S_F(\phi_-) \|
=0.
\end{align*}

From $\| \Omega_-(\phi_-)\|= \| \phi_-\|$,  
there exists some $\delta_0>0$ such that 
if $\phi_-\in B(\delta_0,{\mathcal H})$, 
then there uniquely exists 
$v\in Z_1$ satisfying $\Psi[v]=v$ and 
\begin{align}\label{AB2-1}
\lim_{t\to -\infty}
\| v(t)-e^{-itH}\Omega_-(\phi_-)\| 
=0 .
\end{align}
Moreover, we have 
\begin{align}\label{AB2-2}
\lim_{t\to \infty}
\| v(t)- e^{-itH} S_F \Omega_-(\phi_-)\| 
=0 .
\end{align}
By (\ref{unitary}), 
we see from (\ref{AB2-1}) and (\ref{AB2-2}) that  
\begin{align*}
\| v(t)&-e^{it\Delta}\phi_-\| \\
&\le 
\| v(t)-e^{-itH}\Omega_-(\phi_-)\| +
\| e^{-itH}\Omega_-(\phi_-) - e^{it\Delta}\phi_- \| \\
&= 
\| v(t)-e^{-itH}\Omega_-(\phi_-)\| +
\| \Omega_-(\phi_-) - e^{itH}e^{it\Delta}\phi_- \|  \\
&\to 0 
\quad\text{as $t\to -\infty$}
\end{align*} 
and 
\begin{align*}
\| &v(t)-e^{it\Delta}
\Omega_+^\ast S_F\Omega_-(\phi_-)\| \\
&\le 
\| v(t)-e^{-itH}S_F\Omega_-(\phi_-)\| +
\| e^{-itH}S_F\Omega_-(\phi_-) - 
e^{it\Delta}\Omega_+^\ast S_F\Omega_-(\phi_-) \| \\
&= 
\| v(t)-e^{-itH}S_F\Omega_-(\phi_-)\| +
\| S_F\Omega_-(\phi_-) - 
e^{itH}e^{it\Delta}\Omega_+^\ast 
S_F \Omega_-(\phi_-) \|  \\
&\to 0 
\quad\text{as $t\to +\infty$},
\end{align*}  
respectively. 
Thus, (\ref{AB_1}) and (\ref{AB_2}) hold  
if we define ${S_1}=\Omega_+^\ast S_F \Omega_-$. 
This completes the proof. 
\end{proof}
\section{Schr\"{o}dinger Propagator}\label{SchOp}
For $\lambda>0$ and $y\in\mathbb{R}^3$, 
let $H(\lambda,y)$ be a linear operator 
on $\mathcal{H}$ defined by 
\[
D(H(\lambda,y))=D(-\Delta),\quad
H(\lambda,y)=-\Delta+\lambda^2 V_0(\lambda x-y).
\]
Furthermore, we put 
\[
H(\lambda)=H(\lambda,0). 
\]
The operator $H(\lambda,y)$ becomes 
a self-adjoint operator on $\mathcal{H}$.
In this section, 
we list some properties of 
the Schr\"{o}dinger propagator $e^{-itH(\lambda,y)}$.
The properties are useful 
to prove Theorem \ref{mainthm1}.
\begin{prop}\label{prop:3-1}
Let $\lambda>0$ and $\phi\in\mathcal{H}$. 
\begin{enumerate}[(i)]
  \item 
\begin{align}\label{H(l)-1}
\left(
e^{-itH}\phi_\lambda
\right) (x)
=
\left(
e^{-i\lambda^{-2}tH(\lambda)}\phi
\right)(\lambda^{-1}x) .
\end{align}
  \item 
\begin{align}\label{H(l)-2}
\left(
e^{-itH(\lambda)}\phi
\right) (x-\lambda^{-1}y)
=
\left(
e^{-itH(\lambda,y)}\tau_{\lambda^{-1}y}\phi
\right)(x) ,
\end{align}
where $\tau_z\phi(x):=\phi(x-z)$, $z\in\mathbb{R}^3$.
\end{enumerate}
\end{prop}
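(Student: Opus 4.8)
The plan is to derive both identities from the covariance of $-\Delta+V_0$ under dilations and translations, combined with the fact that unitary conjugation commutes with the Borel functional calculus (equivalently, the uniqueness part of Stone's theorem). No analysis beyond the chain rule for $\Delta$ is needed; the work is entirely algebraic plus domain bookkeeping.

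For part (i), I would introduce the unitary dilation $U_\lambda$ on $\mathcal{H}$, $(U_\lambda\phi)(x)=\lambda^{-3/2}\phi(\lambda^{-1}x)$, so that $\phi_\lambda=\lambda^{3/2}U_\lambda\phi$, and first check that $U_\lambda$ maps $D(H)=H^2(\mathbb{R}^3)$ onto $D(H(\lambda))=H^2(\mathbb{R}^3)$ and that, on this common domain,
\[
U_\lambda^{*}HU_\lambda=\lambda^{-2}H(\lambda).
\]
This is a one-line computation using $\Delta_x[\phi(\lambda^{-1}x)]=\lambda^{-2}(\Delta\phi)(\lambda^{-1}x)$ and $\lambda^{2}(V_0)_{\lambda^{-1}}(x)=\lambda^{2}V_0(\lambda x)$. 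Since $H(\lambda)$ is self-adjoint (as noted in Section \ref{SchOp}) and conjugation by the fixed unitary $U_\lambda$ passes through the functional calculus, this gives $U_\lambda^{*}e^{-itH}U_\lambda=e^{-i\lambda^{-2}tH(\lambda)}$, i.e.\ $e^{-itH}U_\lambda=U_\lambda e^{-i\lambda^{-2}tH(\lambda)}$. Applying this to $\phi$, multiplying by $\lambda^{3/2}$, and evaluating at $x$ (the factors $\lambda^{\pm3/2}$ cancel) yields exactly \eqref{H(l)-1}.

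For part (ii), I would run the same argument with the unitary translation $\tau_{\lambda^{-1}y}$ in place of $U_\lambda$. Because translations commute with $\Delta$ and $\lambda^{2}V_0(\lambda x-y)=\lambda^{2}V_0(\lambda(x-\lambda^{-1}y))$, one checks on $H^2(\mathbb{R}^3)$ that $H(\lambda,y)\tau_{\lambda^{-1}y}=\tau_{\lambda^{-1}y}H(\lambda)$, hence $\tau_{\lambda^{-1}y}^{*}H(\lambda,y)\tau_{\lambda^{-1}y}=H(\lambda)$; applying the functional calculus as before gives $e^{-itH(\lambda,y)}\tau_{\lambda^{-1}y}=\tau_{\lambda^{-1}y}e^{-itH(\lambda)}$, and evaluating the right-hand side at $x$ produces $(e^{-itH(\lambda)}\phi)(x-\lambda^{-1}y)$, which is \eqref{H(l)-2}.

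The only points requiring care, rather than being genuinely hard, are the operator-domain bookkeeping, namely verifying that $U_\lambda$ and $\tau_{\lambda^{-1}y}$ preserve $H^2(\mathbb{R}^3)$ so that the intertwining relations are identities of self-adjoint operators and not merely relations on a form core, and citing the appropriate version of the spectral theorem, $g(W^{*}AW)=W^{*}g(A)W$ for unitary $W$, self-adjoint $A$, and bounded Borel $g$, used here with $g(s)=e^{-its}$. I expect this to be the main (and only) place one must be precise; everything else reduces to the scaling behaviour of $\Delta$ and its translation invariance.
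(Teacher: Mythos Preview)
Your proposal is correct. The paper takes a slightly different, more PDE-flavoured route: it sets $u(t,x)=e^{-itH(\lambda)}\phi(x)$, verifies directly via the chain rule that the function $(t,x)\mapsto u(\lambda^{-2}t,\lambda^{-1}x)$ satisfies $(i\partial_t+\Delta-V_0)v=0$ with initial datum $\phi_\lambda$, and then concludes $v(t)=e^{-itH}\phi_\lambda$ by uniqueness of the Schr\"odinger flow; part (ii) is handled analogously by checking that $u(t,x-\lambda^{-1}y)$ solves the $H(\lambda,y)$ equation with initial datum $\tau_{\lambda^{-1}y}\phi$. Your argument instead establishes the intertwining relation $U_\lambda^{*}HU_\lambda=\lambda^{-2}H(\lambda)$ (respectively $\tau_{\lambda^{-1}y}^{*}H(\lambda,y)\tau_{\lambda^{-1}y}=H(\lambda)$) at the level of generators and then pushes it through the Borel functional calculus. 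Both approaches are standard and ultimately rest on the same fact---uniqueness of the one-parameter unitary group generated by a self-adjoint operator---which the paper invokes implicitly in the ``hence $e^{-itH}\phi_\lambda=u(\lambda^{-2}t,\lambda^{-1}\cdot)$'' step, while you invoke it explicitly via the spectral theorem. Your version is arguably cleaner in that it avoids differentiating the propagator and makes the domain bookkeeping explicit; the paper's version is more hands-on and requires no abstract machinery beyond the definition of $e^{-itA}$.
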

\begin{proof}
Put 
$u(t,x)=e^{-itH(\lambda)}\phi(x)$. 
Then we see that 
\begin{align*}
(i\partial_t+\Delta -V)u(\lambda^{-2}t,\lambda^{-1}x)
=
\lambda^{-2}
\left(
i\partial_t u+\Delta u-\lambda^2V(\lambda\cdot)u
\right) (\lambda^{-2}t,\lambda^{-1}x)=0 
\end{align*} 
and 
\[
u(0,\lambda^{-1}x)=\phi_\lambda(x).
\]
Therefore, we obtain 
\[
e^{-itH}\phi_\lambda(x)
=
u(\lambda^{-2}t,\lambda^{-1}x)
=
e^{-it\lambda^{-2}H(\lambda)}\phi(\lambda^{-1}x).
\]

Furthermore, we have 
\begin{align*}
&\left(
i\partial_t+\Delta -\lambda^2V(\lambda(x-\lambda^{-1}y))
\right)
u(t,x-\lambda^{-1}y)\\
&\qquad\qquad\qquad =
\left(
i\partial_t u+\Delta u-\lambda^2V(\lambda\cdot)u
\right) (t,x-\lambda^{-1}y)=0 
\end{align*}
and 
\[
u(0,x-\lambda^{-1}y)=\tau_{\lambda^{-1}y}\phi(x).
\]
Therefore, we obtain 
\begin{align*}
e^{-itH(\lambda,y)}\tau_{\lambda^{-1}y}\phi(x)
=
u(t,x-\lambda^{-1}y)
=
e^{-itH(\lambda)}\phi(x-\lambda^{-1}y).
\end{align*} 
\end{proof}
\begin{prop}\label{prop:conv-expitH}
If $\phi\in C^\infty_c(\mathbb{R}^3\setminus\{ 0\})$, 
then we have 
\begin{align}\label{conv-expitH}
\lim_{\lambda\to\infty}
\left\|
\left(
e^{-itH(\lambda,y)}-e^{it\Delta}
\right)
\left(
(-\Delta+i)(-\Delta-i)\phi
\right)
\right\| =0.
\end{align}
\end{prop}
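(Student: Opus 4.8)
The plan is to compare the two propagators by Duhamel's principle and then estimate the resulting source term in a \emph{subcritical} spatial Lebesgue exponent; the decisive input is that the Strichartz bounds of Proposition~\ref{prop:st} hold for $H(\lambda,y)$ with a constant independent of $\lambda$ and $y$. To see the latter, write $H(\lambda,y)=-\Delta+V_\lambda$ with $V_\lambda(x):=\lambda^{2}V_0(\lambda x-y)$; the Rollnik and Kato norms are translation invariant and scale as $\|W(\mu\,\cdot)\|_R=\mu^{-2}\|W\|_R$ and $\|W(\mu\,\cdot)\|_{\mathcal K}=\mu^{-2}\|W\|_{\mathcal K}$, so $\|V_\lambda\|_R=\|V_0\|_R$ and $\|V_\lambda\|_{\mathcal K}=\|V_0\|_{\mathcal K}$ for every $\lambda>0$, $y\in\mathbb R^{3}$, and these are $<4\pi$ by (\ref{RK-e}). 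Hence Propositions~\ref{prop:decay} and~\ref{prop:st} apply to $H(\lambda,y)$ with constants depending only on $V_0$.

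Set $\psi:=(-\Delta+i)(-\Delta-i)\phi=(\Delta^{2}+1)\phi$, which again belongs to $C^{\infty}_{c}(\mathbb R^{3}\setminus\{0\})\subset H^{2}(\mathbb R^{3})$, and put $w_\lambda(t):=e^{-itH(\lambda,y)}\psi-e^{it\Delta}\psi$. Since $w_\lambda(0)=0$, $(i\partial_t+\Delta)e^{it\Delta}\psi=0$ and $(i\partial_t+\Delta-V_\lambda)e^{-itH(\lambda,y)}\psi=0$, Duhamel's formula gives
\[
w_\lambda(t)=\frac1i\int_0^{t}e^{-i(t-s)H(\lambda,y)}\,V_\lambda\,e^{is\Delta}\psi\;ds .
\]
I would then factor out the unitary $e^{-itH(\lambda,y)}$, pair with an arbitrary $\chi\in\mathcal H$, apply Hölder in $s$ with exponents $(8/5,8/3)$, and bound $\|e^{-isH(\lambda,y)}\chi\|_{L^{8/3}(\mathbb R;L^{4})}\le C\|\chi\|$ by the homogeneous estimate (\ref{st1}) with the admissible pair $(q,\beta)=(8/3,4)$; this yields
\[
\|w_\lambda(t)\|\le C\left(\int_0^{|t|}\|V_\lambda\,e^{is\Delta}\psi\|_{4/3}^{\,8/5}\,ds\right)^{5/8},
\]
with $C$ independent of $\lambda$, $y$ and $t$.

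The gain comes from measuring $V_\lambda$ in an exponent below the scaling threshold $3/2$. By Hölder, $\|V_\lambda\,e^{is\Delta}\psi\|_{4/3}\le\|V_\lambda\|_{4/3}\,\|e^{is\Delta}\psi\|_{\infty}$; a change of variables gives $\|V_\lambda\|_{4/3}=\lambda^{\,2-9/4}\|V_0\|_{4/3}=\lambda^{-1/4}\|V_0\|_{4/3}$, which is finite because the $r^{-1}$ singularity of $V_0$ is $L^{4/3}$-integrable in three dimensions and $V_0$ decays exponentially, and tends to $0$ as $\lambda\to\infty$. Moreover $\|e^{is\Delta}\psi\|_{\infty}\le C\|e^{is\Delta}\psi\|_{H^{2}}=C\|\psi\|_{H^{2}}$ by the embedding $H^{2}(\mathbb R^{3})\hookrightarrow L^{\infty}$ together with the unitarity of $e^{is\Delta}$ on $H^{2}$. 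Combining these, $\|w_\lambda(t)\|\le C\,\lambda^{-1/4}\|V_0\|_{4/3}\,|t|^{5/8}\|\psi\|_{H^{2}}\to 0$ as $\lambda\to\infty$, uniformly in $y$, which is (\ref{conv-expitH}).

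The main obstacle is precisely recognizing that the Strichartz step cannot be avoided. The naive bound $\|w_\lambda(t)\|\le\int_0^{|t|}\|V_\lambda\,e^{is\Delta}\psi\|\,ds$ is useless: the potential concentrates at scale $\lambda^{-1}$, so $\|V_\lambda\|_{2}=\lambda^{1/2}\|V_0\|_{2}$, and since $e^{is\Delta}\psi$ is generically nonvanishing at the origin one has $\|V_\lambda\,e^{is\Delta}\psi\|_{2}\sim\lambda^{1/2}$; the $L^{2}$-isometry of $e^{-i(t-s)H(\lambda,y)}$ then loses nothing. Only the smoothing encoded in (\ref{st1})--(\ref{st2}) permits lowering the spatial exponent to $4/3<3/2$, the regime in which $\|V_\lambda\|_{L^{4/3}}$ actually decays, and for this one must know that the Strichartz constant is $\lambda$-independent — which is exactly what the scaling of the Rollnik and Kato norms provides.
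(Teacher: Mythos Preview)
Your proof is correct and takes a genuinely different route from the paper's.

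The paper proceeds via strong resolvent convergence in the spirit of the Trotter--Kato theorem (Theorem VIII.20 in \cite{RS1}): it approximates $e^{-it\alpha}$ by polynomials in $(\alpha\pm i)^{-1}$ through a Stone--Weierstrass argument with a Gaussian cutoff $g_m(\alpha)=e^{-\alpha^2/m^2}$, and then shows that the resolvent difference $\bigl((H(\lambda,y)\pm i)^{-1}-(-\Delta\pm i)^{-1}\bigr)\psi$ tends to $0$. That last step uses the support hypothesis on $\phi$ in an essential way: since $(-\Delta\mp i)\phi$ is supported at distance $\eta>0$ from the origin, the pointwise size of $\lambda^{2}V_0(\lambda x-y)$ on that support is controlled by $\lambda^{2}e^{-\mu_0|\lambda\eta-y|}/|\lambda\eta-y|$, which vanishes as $\lambda\to\infty$ for each fixed $y$.

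Your Duhamel-plus-Strichartz argument buys strictly more. You obtain the quantitative rate $\|w_\lambda(t)\|\lesssim\lambda^{-1/4}|t|^{5/8}$, the convergence is automatically uniform in $y$, and---most notably---you never invoke the hypothesis that $\phi$ is supported away from the origin: the only property of $\psi$ you use is $\psi\in H^{2}$. The support condition, indispensable in the paper's pointwise resolvent estimate, becomes irrelevant once the potential is measured in $L^{4/3}$, i.e.\ below the scaling-critical exponent $3/2$. The price is that your argument rests on the $\lambda$-uniformity of the Strichartz constant; your scaling computation for the Rollnik and Kato norms is one correct justification, and the same uniformity can alternatively be read off directly from the identity $e^{-isH(\lambda)}\phi=(e^{-i\lambda^{2}sH}\phi_\lambda)(\lambda\,\cdot)$ of Proposition~\ref{prop:3-1}, which transports the dispersive bound (\ref{decay}) from $H$ to $H(\lambda)$ with identical constant.
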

\begin{proof}
The proof is essentially similar to that of 
Theorem VIII.20 in \cite{RS1}.
However, for the sake of completeness, 
we give here the proof dividing two steps.\\
(Step I.)\
For $\alpha\in\mathbb{R}$ and $m\in\mathbb{Z}_{\ge 0}$, 
let $f(\alpha)=e^{-it\alpha}$ and  
$g_m(\alpha)=e^{-\alpha^2/m^2}$.  
Let $\psi\in \mathcal{H}$ and $\varepsilon>0$. 
Then there exists some $m_0\in\mathbb{Z}_{\ge 0}$ 
such that 
\begin{align}\label{m-0}
\| g_m(-\Delta) \psi -\psi \| \le \varepsilon
\end{align}
for any $m\ge m_0$.
Henceforth, we assume that $m= m_0$.
Since $g_m$ is a continuous function 
vanishing at infinity,  
it follows from the Stone-Weierstrass theorem 
(see, e.g., \cite{RS1}) that 
there exists some two-parameter polynomial 
$P(\alpha,\beta)$ such that 
\[
\sup_{\alpha\in\mathbb{R}}
\left|
g_m(\alpha)
-
P\left(
(\alpha+i)^{-1},(\alpha-i)^{-1}
\right)
\right|
\le
\varepsilon .
\]
Therefore, 
for any self-adjoint operator $A$, 
we have 
\begin{align}\label{gm(A)}
\left\|
g_m(A)
-
P\left(
(A+i)^{-1},(A-i)^{-1}
\right)
\right\|
\le
\varepsilon .
\end{align}
Thus, we obtain 
\begin{align}
&\| 
(g_m(H(\lambda,y))-g_m(-\Delta))\psi
\| \nonumber\\
&\le 
\left\|
g_m(H(\lambda,y))
-
P\left(
(H(\lambda,y)+i)^{-1},(H(\lambda,y)-i)^{-1}
\right)
\right\|
\| \psi\| \nonumber\\
&\quad +
\left\|
g_m(-\Delta)
-
P\left(
(-\Delta+i)^{-1},(-\Delta-i)^{-1}
\right)
\right\|
\| \psi\| \nonumber\\
&\quad +
\left\|
P\left(
(H(\lambda,y)+i)^{-1},(H(\lambda,y)-i)^{-1}
\right) \psi
-
P\left(
(-\Delta+i)^{-1},(-\Delta-i)^{-1}
\right) \psi
\right\| \nonumber\\
&\le 
2\varepsilon \| \psi\| 
+
\left\|
P\left(
(H(\lambda,y)+i)^{-1},(H(\lambda,y)-i)^{-1}
\right) \psi
-
P\left(
(-\Delta+i)^{-1},(-\Delta-i)^{-1}
\right) \psi
\right\| .
\label{polypoly}
\end{align}
Here, we have used  
the property (\ref{gm(A)}) in the last inequality.
Since it follows that 
\[
P(\alpha_1,\beta_1)
-
P(\alpha_2,\beta_2)
=
\sum_{k,l}C_{k,l}
\left\{
(\alpha_1-\alpha_2)
\widetilde{P}^{k-1}(\alpha_1,\alpha_2)\beta_1^l
+
(\beta_1-\beta_2)
\widetilde{P}^{l-1}(\beta_1,\beta_2)\alpha_2^k
\right\}
\]
for some two-parameter polynomial 
$\widetilde{P}^{k}$,
$k=-1,0,1,2,\cdots$, 
we obtain 
\begin{align}
&\left\|
 P\left(
(H(\lambda,y)+i)^{-1},(H(\lambda,y)-i)^{-1}
\right) \psi
-
P\left(
(-\Delta+i)^{-1},(-\Delta-i)^{-1}
\right) \psi
\right\| \nonumber\\
&\ \le 
C\left\{
\left\|
\left(
(H(\lambda,y)+i)^{-1}-(-\Delta+i)^{-1}
\right) \psi
\right\|
+
\left\|
\left(
(H(\lambda,y)-i)^{-1}-(-\Delta-i)^{-1}
\right) \psi
\right\|
\right\} .\label{polypoly2}
\end{align} 
Henceforth, we put 
$\psi=(-\Delta+i)(-\Delta-i)\phi$, 
$\phi\in C^\infty_c(\mathbb{R}^3\setminus\{ 0\})$. 
Then we see that 
\begin{align*}
&\left\|
\left(
(H(\lambda,y)\pm i)^{-1}-(-\Delta\pm i)^{-1}
\right) \psi
\right\| \\
&\quad\le 
\left\|
\left(
(H(\lambda,y)\pm i)^{-1}-(-\Delta\pm i)^{-1}
\right) (-\Delta+i)(-\Delta-i)\phi
\right\| \\
&\quad =
\left\|
(H(\lambda,y)\pm i)^{-1}
\lambda^2V(\cdot -\lambda^{-1}y)
(-\Delta\mp i)\phi
\right\| \\
&\quad\le 
\left\|
\lambda^2V(\cdot -\lambda^{-1}y)
(-\Delta\mp i)\phi
\right\| .
\end{align*} 
We set 
$\eta=\mathrm{dist}(\mathrm{supp}\phi,0)$.
If $\lambda>0$ is sufficiently large, 
then we have 
\begin{align}
\left\|
\left(
(H(\lambda,y)\pm i)^{-1}-(-\Delta\pm i)^{-1}
\right) \psi
\right\| 
\le C
|Q_0|\lambda^2
\frac{\exp(-\mu_0|\lambda\eta-y|)}{|\lambda\eta-y|}
\| (-\Delta\mp i)\phi\| .\label{etalambda}
\end{align} 
We see from (\ref{polypoly})--(\ref{etalambda})
that 
\begin{align}\label{gm-conv}
\lim_{\lambda\to\infty}\| g_m(H(\lambda,y))\psi - 
g_m(-\Delta)\psi\| 
=0.
\end{align} 
(Step II.)\ 
Since $f g_m$ is a continuous function 
vanishing at infinity, 
it follows from 
the same argument of the proof of (\ref{polypoly}) 
that 
\begin{align}
\| 
&(f(H(\lambda,y))-f(-\Delta))\psi
\| \nonumber\\
&\le 
\left\| 
(fg_m)(H(\lambda,y))\psi 
-
f(H(\lambda,y))\psi
\right\| 
+
\left\| 
(fg_m)(-\Delta)\psi 
-
f(-\Delta)\psi
\right\| \nonumber\\
&\quad +
\left\| 
(fg_m)(H(\lambda,y))\psi 
-
(fg_m)(-\Delta)\psi
\right\| \nonumber\\
&\le 
\| f(H(\lambda,y))\| 
\| g_m(H(\lambda,y))\psi-\psi\| 
+
\| f(-\Delta)\| 
\| g_m(-\Delta)\psi-\psi\| 
+
2\varepsilon \| \psi\| \nonumber\\
&\quad +
\left\|
\acute{P}\left(
(H(\lambda,y)+i)^{-1},(H(\lambda,y)-i)^{-1}
\right) \psi
-
\acute{P}\left(
(-\Delta+i)^{-1},(-\Delta-i)^{-1}
\right) \psi
\right\|
\nonumber\\
&\le 
\| g_m(H(\lambda,y))\psi-g_m(-\Delta)\psi\|
+
2\| g_m(-\Delta)\psi-\psi\| 
+
2\varepsilon \| \psi\| \nonumber\\
&\quad +
\left\|
\acute{P}\left(
(H(\lambda,y)+i)^{-1},(H(\lambda,y)-i)^{-1}
\right) \psi
-
\acute{P}\left(
(-\Delta+i)^{-1},(-\Delta-i)^{-1}
\right) \psi
\right\| 
\end{align}
for some two-parameter polynomial $\acute{P}$.
Here, we have used the unitarity of $f(A)$
in the last inequality.
By (\ref{m-0}) and (\ref{polypoly2})--(\ref{gm-conv}),
(\ref{conv-expitH}) holds. 
\end{proof}
\begin{prop}\label{prop:bdd-expitH}
For any $\lambda>0$ and for any
$\phi\in C^\infty_c(\mathbb{R}^3\setminus\{ 0\})$, 
we have 
\begin{align}\label{bdd-eitH}
\left\|
e^{-itH(\lambda)}\phi
\right\|_6 
\le C(\phi) 
\langle t\rangle^{-1}.
\end{align}
Here, the constant $C(\phi)$ is 
independent of $\lambda$ and $t$.
\end{prop}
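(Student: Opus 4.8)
The plan is to prove (\ref{bdd-eitH}) by splitting at $|t|=1$ and combining two estimates that are \emph{uniform in $\lambda$}: a dispersive $|t|^{-1}$ decay valid for $|t|\ge 1$, and plain boundedness of $\|e^{-itH(\lambda)}\phi\|_6$ valid for every $t$. Since $\langle t\rangle^{-1}$ behaves like $\min\{1,|t|^{-1}\}$, gluing the two gives (\ref{bdd-eitH}) with $C(\phi)$ essentially the larger of the two resulting constants.

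For the decay I would exploit the exact scaling encoded in Proposition \ref{prop:3-1}(i). Replacing $t$ by $\lambda^2 t$ in (\ref{H(l)-1}) and changing variables in the integral defining $\|\cdot\|_6$ gives the identity
\[
\bigl\| e^{-itH(\lambda)}\phi \bigr\|_6 = \lambda^{-1/2} \bigl\| e^{-i\lambda^2 tH}\phi_\lambda \bigr\|_6 .
\]
Since (\ref{RK}) is equivalent to (\ref{RK-e}), $V_0$ satisfies (\ref{RK2}), so the $L^p$-$L^q$ estimate (\ref{LpLq0}) is available for $e^{-itH}$; for the pair $(p,q)=(6,6/5)$ it reads $\| e^{-itH}\psi\|_6 \le C|t|^{-1}\|\psi\|_{6/5}$. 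Inserting $\psi=\phi_\lambda$ and using $\|\phi_\lambda\|_{6/5}=\lambda^{5/2}\|\phi\|_{6/5}$ yields $\| e^{-i\lambda^2 tH}\phi_\lambda\|_6 \le C\lambda^{-2}|t|^{-1}\lambda^{5/2}\|\phi\|_{6/5}$, and the powers of $\lambda$ cancel the prefactor $\lambda^{-1/2}$, leaving $\| e^{-itH(\lambda)}\phi\|_6 \le C|t|^{-1}\|\phi\|_{6/5}$ with $C$ independent of $\lambda$ and of $t$. For $|t|\ge 1$ this is at most $\sqrt{2}\,C\|\phi\|_{6/5}\langle t\rangle^{-1}$.

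For a bound valid at all $t$, in particular on $|t|\le 1$, I would use conservation of the quadratic form of $H(\lambda)$. Write $H(\lambda)=-\Delta+W_\lambda$ with $W_\lambda(x)=\lambda^2 V_0(\lambda x)$; the Rollnik norm is scale invariant, $\|W_\lambda\|_R=\|V_0\|_R$, and (\ref{RK-e}) gives $\|V_0\|_R<4\pi$, so the standard form bound for Rollnik potentials, $|\langle W_\lambda f,f\rangle|\le (\|W_\lambda\|_R/4\pi)\|\nabla f\|^2$, holds with relative bound $a:=\|V_0\|_R/4\pi<1$ independent of $\lambda$; in particular $H(\lambda)\ge (1-a)(-\Delta)\ge 0$ in the form sense. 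Since $e^{-itH(\lambda)}$ commutes with $H(\lambda)$ and is unitary on ${\mathcal H}$,
\[
\bigl\langle H(\lambda)e^{-itH(\lambda)}\phi,\, e^{-itH(\lambda)}\phi \bigr\rangle = \bigl\langle H(\lambda)\phi,\phi \bigr\rangle \le (1+a)\|\nabla\phi\|^2 ,
\]
whence $(1-a)\| \nabla e^{-itH(\lambda)}\phi\|^2 \le (1+a)\|\nabla\phi\|^2$ for all $t$ and $\lambda$. The Sobolev inequality $\|g\|_6\le C_S\|\nabla g\|_2$ in $\mathbb{R}^3$ then gives $\| e^{-itH(\lambda)}\phi\|_6 \le C_S\bigl((1+a)/(1-a)\bigr)^{1/2}\|\nabla\phi\|_2$, uniformly in $t$ and $\lambda$; on $|t|\le 1$ this dominates a constant times $\langle t\rangle^{-1}$. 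Taking $C(\phi)=\sqrt{2}\,\max\bigl\{ C\|\phi\|_{6/5},\, C_S((1+a)/(1-a))^{1/2}\|\nabla\phi\|_2 \bigr\}$ finishes the proof.

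The only real difficulty is the uniformity in $\lambda$ of the two constants, and that is precisely what the structure above delivers: the exact scale invariance of the pair $\bigl(-\Delta,\ \lambda^2 V_0(\lambda\,\cdot)\bigr)$, read off from (\ref{H(l)-1}), forces the cancellation in the decay estimate, while the scale invariance of $\|\cdot\|_R$ freezes the relative form bound $a<1$. No estimate beyond Propositions \ref{prop:decay} and \ref{prop:3-1} is required; the remaining ingredients (the change of variables, the cancellation of powers of $\lambda$, and the elementary comparison of $\langle t\rangle^{-1}$ with $\min\{1,|t|^{-1}\}$) are routine.
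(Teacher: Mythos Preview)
Your proof is correct and follows the same two-part strategy as the paper: the $|t|^{-1}$ decay via the scaling identity (\ref{H(l)-1}) combined with the $L^{6/5}\to L^6$ dispersive estimate (\ref{LpLq0}) is exactly what the paper does (your computation matches (\ref{L6decay}) verbatim), and the uniform-in-time bound comes from conservation of the quadratic form of $H(\lambda)$ together with the Sobolev embedding.

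The only genuine difference is in how you control the potential term in the energy argument. The paper uses H\"older with the scale-invariant $L^{3/2}$ norm, $|\langle W_\lambda f,f\rangle|\le \|V_0\|_{3/2}\|f\|_6^2$, then absorbs the resulting $\|e^{-itH(\lambda)}\phi\|_6^2$ on the left via the explicit inequality $C_b^2\|e^{-r}/r\|_{3/2}<1$ (Proposition~\ref{norm-of-Yukawa}(ii)); it also invokes the support hypothesis $\phi\in C_c^\infty(\mathbb{R}^3\setminus\{0\})$ to bound $|\langle \lambda^2(V_0)_{\lambda^{-1}}\phi,\phi\rangle|$ uniformly in $\lambda$. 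You instead use the Rollnik form bound $|\langle W_\lambda f,f\rangle|\le(\|V_0\|_R/4\pi)\|\nabla f\|^2$ and the scale invariance $\|W_\lambda\|_R=\|V_0\|_R$, which lets you bound $\|\nabla e^{-itH(\lambda)}\phi\|$ directly without ever seeing $\|e^{-itH(\lambda)}\phi\|_6$ on the right. Your route is marginally cleaner---it does not need the support condition for this step, nor the sharp Sobolev and $L^{3/2}$ constants---while the paper's route has the virtue of making the numerical condition (\ref{C-b3/2}) explicit. Both rest on the same structural fact: the relevant norm of $\lambda^2 V_0(\lambda\,\cdot)$ is $\lambda$-independent.
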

\begin{proof}
Let $C_b$ be the best constant 
for the embedding 
$\dot{H}^1(\mathbb{R}^3)\hookrightarrow L^6(\mathbb{R}^3)$: 
\[
C_b=
\sup_{\psi\in \dot{H}^1\setminus\{ 0\}}
\frac{\| \psi\|_6}{\| \nabla \psi\|}.
\]
Then we obtain 
\begin{align*}
\left\|
e^{-itH(\lambda)}\phi
\right\|_6^2  
&\le 
C_b^2 \left\|
\nabla e^{-itH(\lambda)}\phi
\right\|^2 \\
&\le 
C_b^2 
\left\langle
\nabla e^{-itH(\lambda)}\phi,
\nabla e^{-itH(\lambda)}\phi
\right\rangle \\
&\le 
C_b^2 
\left\langle
(-\Delta)e^{-itH(\lambda)}\phi,
e^{-itH(\lambda)}\phi
\right\rangle \\
&\le 
C_b^2 
\left\langle
H(\lambda) e^{-itH(\lambda)}\phi,
e^{-itH(\lambda)}\phi
\right\rangle 
-
C_b^2 
\left\langle
\lambda^2{(V_0)}_{\lambda^{-1}}e^{-itH(\lambda)}\phi,
e^{-itH(\lambda)}\phi
\right\rangle \\
&\le 
C_b^2 
\left\langle
H(\lambda) \phi,
\phi
\right\rangle 
+
C_b^2 
\| \lambda^2{(V_0)}_{\lambda^{-1}}\|_{3/2}
\left\|
e^{-itH(\lambda)}\phi
\right\|_6^2 \\
&\le 
C_b^2 
\| \nabla\phi\|^2
+
C_b^2 
\left|
\left\langle
\lambda^2{(V_0)}_{\lambda^{-1}} \phi,
\phi
\right\rangle 
\right| 
+
C_b^2 
\| V_0\|_{3/2}
\left\|
e^{-itH(\lambda)}\phi
\right\|_6^2 .
\end{align*} 
It follows that 
\begin{align*}
\left|
\left\langle
\lambda^2{(V_0)}_{\lambda^{-1}} \phi,
\phi
\right\rangle 
\right| 
\le 
|Q_0|\lambda\frac{\exp(-\mu_0\eta\lambda)}{\eta}\|\phi\|^2 
\le
\frac{|Q_0|\|\phi\|^2}{e\mu_0\eta^2},
\end{align*} 
where 
$\eta=\mathrm{dist}(\mathrm{supp}\phi,0)$.
Since 
\begin{align}\label{C-b3/2}
C_b^2
\left\|
\frac{e^{-r}}{r}
\right\|_{3/2}
<1
\end{align}
(for the proof, see Section \ref{Appendix} below), 
we see that 
\begin{align}\label{L6Bdd}
\left\|
e^{-itH(\lambda)}\phi
\right\|_6
\le 
\sqrt{
\frac{\displaystyle{
\|\nabla\phi\|^2
+
\frac{|Q_0|}{e\mu_0\eta^2}\|\phi\|^2
}}{\displaystyle{
C_b^{-2}-\frac{|Q_0|}{\mu_0}
\left\|
\frac{e^{-r}}{r}
\right\|_{3/2}
}}
}.
\end{align}

Furthermore, 
by (\ref{LpLq0}) and (\ref{H(l)-1}), 
we have 
\begin{align}
\left\|
e^{-itH(\lambda)}\phi
\right\|_6
&=
\left\|
\left(
e^{-it\lambda^2 H(\lambda)}\phi
\right)_{\lambda^{-1}}
\right\|_6\nonumber\\
&=
\lambda^{-1/2}
\left\|
e^{-it\lambda^2 H(\lambda)}\phi
\right\|_6\nonumber\\
&\le C
\lambda^{-1/2}
|t\lambda^2|^{-3(1/2-1/6)}
\| \phi_\lambda\|_{6/5}\nonumber\\
&= C
\lambda^{-1/2-2+5/2}t^{-1}\|\phi\|_{6/5}\nonumber\\
&= C
t^{-1}\|\phi\|_{6/5}.\label{L6decay}
\end{align} 
From (\ref{L6Bdd}) and (\ref{L6decay}), 
we have (\ref{bdd-eitH}).
\end{proof}
\section{Proof of Theorem \ref{mainthm1}}
\label{sec:Pr. of main 1}
As we mention in Section \ref{sec:intro}, 
we can reconstruct $V_0$ 
from the knowledge of scattering states 
$(\phi_-,{S_1}(\phi_-))$. 
In this section, we give the proof of 
Theorem \ref{mainthm1},  
which enables us to see the exact form of $V_1$.

Set 
\[
K[\phi]=
\lim_{\varepsilon\to 0}
\frac{i}{\varepsilon^3}
\Big\langle 
(\Omega_+ {S_1} \Omega_-^\ast - id)
(\varepsilon \phi), \phi 
\Big\rangle .
\]
Using (\ref{K1}) and (\ref{K2}), 
we have the following property:
\begin{prop}\label{prop:SAL}
(Strauss \cite{Strauss 1973}) 
Assume that (\ref{RK}) holds. 
Then we have for all $\phi\in {\mathcal H}$, 
\begin{equation}\label{SAL}
K[\phi]=\int_{\mathbb{R}} 
\Big\langle 
F(e^{-itH}\phi),\phi
\Big\rangle dt .
\end{equation}
\end{prop}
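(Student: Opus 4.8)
The plan is to identify the small-amplitude limit $K[\phi]$ with a convergent space-time integral against the nonlinearity evaluated along the free-like flow $e^{-itH}\phi$. I would first relate $\Omega_+ S_1 \Omega_-^\ast$ to the operator $S_F$ constructed in the proof of Theorem \ref{thm:direct}, where $S_1 = \Omega_+^\ast S_F \Omega_-$; hence $\Omega_+ S_1 \Omega_-^\ast = \Omega_+ \Omega_+^\ast S_F \Omega_- \Omega_-^\ast = S_F$ (using that $P_{ac}$ is the identity under condition (\ref{RK}), so $\Omega_\pm \Omega_\pm^\ast = id$). Thus it suffices to compute $\lim_{\varepsilon\to 0}(i/\varepsilon^3)\langle (S_F - id)(\varepsilon\phi), \phi\rangle$. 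By the definition (\ref{SF}) of $S_F$, with $u = u^{\varepsilon\phi}$ the solution from Theorem \ref{thm:direct} with data $\varepsilon\phi$,
\[
\langle (S_F - id)(\varepsilon\phi),\phi\rangle
= \frac{1}{i}\int_{\mathbb{R}} \langle e^{itH} F(u(t)), \phi\rangle\, dt
= \frac{1}{i}\int_{\mathbb{R}} \langle F(u(t)), e^{-itH}\phi\rangle\, dt .
\]
Since $F$ is cubic, the natural guess is that $u(t) = \varepsilon e^{-itH}\phi + O(\varepsilon^3)$ in $Z_1$, so $F(u(t)) = \varepsilon^3 F(e^{-itH}\phi) + o(\varepsilon^3)$, giving (\ref{SAL}) after dividing by $\varepsilon^3$ and multiplying by $i$.

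The key steps are therefore: first, establish the expansion of $u^{\varepsilon\phi}$. From (\ref{K1}) we have $\|u^{\varepsilon\phi}\|_{Z_1} \le C\varepsilon\|\phi\|$, and from (\ref{K2}) we have $\|u^{\varepsilon\phi} - \varepsilon e^{-itH}\phi\|_{Z_1} \le C\varepsilon^3\|\phi\|^3$. Second, use the trilinearity and continuity of $F$: writing $F(u) - F(\varepsilon e^{-itH}\phi)$ as a sum of terms each containing at least one factor of $w := u - \varepsilon e^{-itH}\phi$, and estimating as in the proof of Theorem \ref{thm:direct} via the Hölder--Young inequality $\|F(u)(t)\| \le \|V_0\|_{3/2}\|u(t)\|_{18/7}^3$ (more precisely the polarized version $\|(V_1\ast(u_1\bar u_2))u_3\| \lesssim \prod \|u_j\|_{18/7}$), one gets
\[
\Bigl|\int_{\mathbb{R}} \langle F(u(t)) - \varepsilon^3 F(e^{-itH}\phi), e^{-itH}\phi\rangle\, dt\Bigr|
\le C\|w\|_{Y_1}\bigl(\|u\|_{Y_1} + \varepsilon\|e^{-itH}\phi\|_{Y_1}\bigr)^2 \|\phi\|
\le C\varepsilon^5 \|\phi\|^5,
\]
which is $o(\varepsilon^3)$. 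Third, $\varepsilon^3\int_{\mathbb{R}}\langle F(e^{-itH}\phi), e^{-itH}\phi\rangle dt$ is an absolutely convergent integral by the same Strichartz bound ($e^{-itH}\phi \in Y_1$ by (\ref{st1})). Dividing by $\varepsilon^3$, multiplying by $i$, and letting $\varepsilon\to 0$ yields (\ref{SAL}). One should also note this argument only needs $\phi\in\mathcal H$, consistent with the statement.

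The main obstacle is the bookkeeping in the second step — carefully splitting the cubic expression $F(u) - F(\varepsilon e^{-itH}\phi)$ into its multilinear pieces and controlling each in the space-time norm $Y_1 = L^3(\mathbb{R};L^{18/7})$ uniformly. This is routine given the estimates already assembled for Theorem \ref{thm:direct}, but it is where all the analytic content sits; everything else is algebraic manipulation of wave operators and taking limits. A minor point worth checking is that the pairing $\langle F(u(t)), e^{-itH}\phi\rangle$ is integrable in $t$ for fixed small $\varepsilon$ so that Fubini/the interchange of limit and integral is justified, which again follows from $u, e^{-itH}\phi \in Y_1$ together with $\|V_0\|_{3/2} < \infty$.
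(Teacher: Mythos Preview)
Your proposal is correct and follows exactly the approach the paper indicates: the paper does not give a detailed proof but simply says ``Using (\ref{K1}) and (\ref{K2}), we have the following property,'' citing Strauss, and your argument is precisely the standard small-amplitude expansion based on those two bounds together with the trilinear structure of $F$ and the identification $\Omega_+ S_1 \Omega_-^\ast = S_F$.

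One remark: your computation actually produces $\int_{\mathbb{R}}\langle F(e^{-itH}\phi), e^{-itH}\phi\rangle\,dt$ rather than the formula (\ref{SAL}) as literally stated (with bare $\phi$ in the second slot); the version you obtain is the correct one, and is the one the paper itself uses in the proof of Theorem~\ref{mainthm1}(ii), so (\ref{SAL}) as printed appears to contain a typo.
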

We are now ready to state 
the proof of Theorem \ref{mainthm1}.
\begin{proof}[Proof of Theorem \ref{mainthm1}]
Remark that we obtain  
For any $\phi\in {\mathcal H}\setminus \{ 0\}$ 
and any $\lambda> \| \phi\|^{3/2}\delta^{-1}$, 
$(\Omega_+ {S_1} \Omega_-^\ast -id)(\lambda^{-3}\phi_\lambda)$ is 
well-defined because we have 
\[
\| \lambda^{-3}\phi_\lambda\| \le 
\lambda^{-3/2}\| \phi\| . 
\]  

Let $u_\lambda$ be 
the time-global solution to (\ref{IE}) 
satisfying $\phi_-=\lambda^{-3}\phi_\lambda$. 
Put 
\begin{align*}
u^0_\lambda = 
e^{it\Delta}(\lambda^{-3}\phi_\lambda) ,
\quad 
\widetilde{u^0_\lambda}= 
e^{it\Delta}(\phi_\lambda),
\quad 
u^1_\lambda = 
u_\lambda -u^0_\lambda .
\end{align*} 
Then we obtain 
\[
i\lambda^4
\Big\langle 
(\Omega_+ {S_1} \Omega_-^\ast - id)
(\lambda^{-3}\phi_\lambda), \phi_\lambda 
\Big\rangle 
=
(I)_\lambda +
(II)_\lambda^1+(II)_\lambda^2+(II)_\lambda^3,
\]
where 
\begin{align*}
(I)_\lambda &= 
\lambda^4 \int_{\mathbb{R}} 
\Big\langle 
\bigl(
V_1\ast u^0_\lambda \overline{u^0_\lambda}
\bigr) u^0_\lambda, 
\widetilde{u^0_\lambda}
\Big\rangle dt, \\
(II)_\lambda^1 &= 
\lambda^4 \int_{\mathbb{R}} 
\Big\langle 
\bigl(
V_1\ast u^1_\lambda \overline{u^0_\lambda}
\bigr) u^0_\lambda, 
\widetilde{u^0_\lambda}
\Big\rangle dt, \\
(II)_\lambda^2 &= 
\lambda^4 \int_{\mathbb{R}} 
\Big\langle 
\bigl(
V_1\ast u_\lambda \overline{u^1_\lambda}
\bigr) u^0_\lambda, 
\widetilde{u^0_\lambda}
\Big\rangle dt, \\
(II)_\lambda^3 &= 
\lambda^4 \int_{\mathbb{R}} 
\Big\langle 
\bigl(
V_1\ast u_\lambda \overline{u_\lambda}
\bigr) u^1_\lambda, 
\widetilde{u^0_\lambda}
\Big\rangle dt .
\end{align*} 
\pagebreak
Following the proof of 
Proposition \ref{SAL}, 
we see that for $j=1,2,3$, 
\[
\bigl|
(II)_\lambda^j 
\bigl| 
\le C 
\lambda^{-7/2}\| \phi\|^6 
\to 0 
\quad\text{as $\lambda\to\infty$}.
\]
Following Proposition \ref{prop:3-1}, 
we obtain 
\begin{align*}
(I)_\lambda &= 
\lambda^{-5}\int_{\mathbb{R}^{7}}
Q_1\frac{\exp(-\mu_1 |y|)}{|y|}
\bigl| e^{-i\lambda^{-2}tH(\lambda)}\phi
(\lambda^{-1}x-\lambda^{-1}y) \bigl|^2 
\bigl| e^{-i\lambda^{-2}tH(\lambda)}\phi
(\lambda^{-1}x) \bigl|^2 d(t,x,y)\\
&=
\int_{\mathbb{R}^{7}}
Q_1\frac{\exp(-\mu_1 |y|)}{|y|}
\bigl| 
e^{-itH(\lambda,y)}\tau_{\lambda^{-1}y}\phi (x) 
\bigl|^2 
\bigl| e^{-itH(\lambda)}\phi(x) \bigl|^2 d(t,x,y) \\
&=
\int_{\mathbb{R}^3}
Q_1\frac{\exp(-\mu_1 |y|)}{|y|}
\Phi(\lambda,y)dy ,
\end{align*}
where 
\[
\Phi(\lambda,y)
=
\int_{\mathbb{R}^{1+3}}
\bigl| 
e^{-itH(\lambda,y)}\tau_{\lambda^{-1}y}\phi (x) 
\bigl|^2 
\bigl| e^{-itH(\lambda)}\phi(x) \bigl|^2 d(t,x) .
\]
 
For the function $\Phi$, 
we have the following property:
\begin{lem}\label{lem-Phi}
Assume that 
$\phi\in C^\infty_c(\mathbb{R}^3\setminus \{ 0\})$ 
satisfies $\phi\neq 0$ and 
$(\Delta^2 +1)^{-1}\phi\in 
C^\infty_c(\mathbb{R}^3\setminus \{ 0\})$.
\begin{enumerate}[(i)]
  \item For any $y\in\mathbb{R}^3$, we have 
\[
\lim_{\lambda\to\infty}\Phi(\lambda,y)=
\| e^{it\Delta}\phi \|_{(4,4)}^4 .
\]
  \item 
For any $\lambda>0$ and $y\in\mathbb{R}^3$, 
we have  
\[ 
| \Phi(\lambda,y) |\le C(\phi) ,
\]  
where the constant $C(\phi)$ is 
independent of $\lambda$ and $y$.
\end{enumerate}
\end{lem}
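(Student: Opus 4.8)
The plan is to establish (ii) first, since the uniform bound it yields is exactly the dominating function needed for (i). Write $f_\lambda(t)=e^{-itH(\lambda,y)}\tau_{\lambda^{-1}y}\phi$ and $g_\lambda(t)=e^{-itH(\lambda)}\phi$, so that $\Phi(\lambda,y)=\int_{\mathbb{R}}\big(\int_{\mathbb{R}^3}|f_\lambda(t,x)|^2|g_\lambda(t,x)|^2\,dx\big)\,dt$. By (\ref{H(l)-2}) one has $f_\lambda(t,x)=\big(e^{-itH(\lambda)}\phi\big)(x-\lambda^{-1}y)$, hence $\|f_\lambda(t)\|_6=\|g_\lambda(t)\|_6\le C(\phi)\langle t\rangle^{-1}$ for all $\lambda,y,t$ by Proposition \ref{prop:bdd-expitH}, while $\|f_\lambda(t)\|=\|g_\lambda(t)\|=\|\phi\|$ by (\ref{unitary}). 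Interpolating between $L^2$ and $L^6$ gives $\|f_\lambda(t)\|_4^2\le\|\phi\|^{1/2}C(\phi)^{3/2}\langle t\rangle^{-3/2}$, and the same for $g_\lambda$, so H\"older's inequality yields $\int_{\mathbb{R}^3}|f_\lambda(t)|^2|g_\lambda(t)|^2\,dx\le\|f_\lambda(t)\|_4^2\|g_\lambda(t)\|_4^2\le C(\phi)\langle t\rangle^{-3}$; integrating in $t$ gives $|\Phi(\lambda,y)|\le C(\phi)$ uniformly in $\lambda$ and $y$, which is (ii).

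For (i) I would first show that for each fixed $t$,
\[
\int_{\mathbb{R}^3}|f_\lambda(t)|^2|g_\lambda(t)|^2\,dx\longrightarrow\int_{\mathbb{R}^3}|e^{it\Delta}\phi|^4\,dx\qquad\text{as }\lambda\to\infty.
\]
The starting point is that $f_\lambda(t),g_\lambda(t)\to e^{it\Delta}\phi$ in $\mathcal{H}$: for $g_\lambda$, apply Proposition \ref{prop:conv-expitH} with $y=0$ to the function $\psi:=(\Delta^2+1)^{-1}\phi$, which lies in $C^\infty_c(\mathbb{R}^3\setminus\{0\})$ by hypothesis and satisfies $(-\Delta+i)(-\Delta-i)\psi=\phi$; for $f_\lambda$, split $f_\lambda(t)-e^{it\Delta}\phi=e^{-itH(\lambda,y)}(\tau_{\lambda^{-1}y}\phi-\phi)+(e^{-itH(\lambda,y)}-e^{it\Delta})\phi$, the first summand having $\mathcal{H}$-norm $\|\tau_{\lambda^{-1}y}\phi-\phi\|\to0$ by strong continuity of translations (as $\lambda^{-1}y\to0$) and the second tending to $0$ as before. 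Consequently $f_\lambda(t)g_\lambda(t)\to(e^{it\Delta}\phi)^2$ in $L^1$ (a bilinear estimate using $\|f_\lambda(t)\|=\|g_\lambda(t)\|=\|\phi\|$); combining this with the uniform bound $\|f_\lambda(t)g_\lambda(t)\|_3\le\|f_\lambda(t)\|_6\|g_\lambda(t)\|_6\le C(\phi)\langle t\rangle^{-2}$ and with $\|(e^{it\Delta}\phi)^2\|_3<\infty$, the interpolation inequality $\|h\|_2\le\|h\|_1^{1/4}\|h\|_3^{3/4}$ applied to $h=f_\lambda(t)g_\lambda(t)-(e^{it\Delta}\phi)^2$ forces $\|f_\lambda(t)g_\lambda(t)\|_2^2\to\|(e^{it\Delta}\phi)^2\|_2^2=\int_{\mathbb{R}^3}|e^{it\Delta}\phi|^4\,dx$, which is the displayed limit.

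Finally, by (ii) the inner integral $\int_{\mathbb{R}^3}|f_\lambda(t)|^2|g_\lambda(t)|^2\,dx$ is dominated, uniformly in $\lambda$ and $y$, by $C(\phi)\langle t\rangle^{-3}\in L^1(\mathbb{R}_t)$, so the dominated convergence theorem gives $\Phi(\lambda,y)\to\int_{\mathbb{R}}\int_{\mathbb{R}^3}|e^{it\Delta}\phi|^4\,dx\,dt=\|e^{it\Delta}\phi\|_{(4,4)}^4$, which is (i). The one delicate point — and the reason for the technical hypothesis $(\Delta^2+1)^{-1}\phi\in C^\infty_c(\mathbb{R}^3\setminus\{0\})$ — is the passage from $L^2$-convergence of $f_\lambda(t)$ and $g_\lambda(t)$ to convergence of the quartic integral: $L^2$-convergence of the two factors only gives $L^1$-convergence of their product, and it is Proposition \ref{prop:bdd-expitH} (the uniform $L^6$ dispersive bound, inherited by $f_\lambda$ via (\ref{H(l)-2})) that allows one to interpolate this $L^1$-convergence against a uniform $L^3$ bound and recover the $L^2$-convergence of the product.
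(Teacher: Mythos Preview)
Your argument is correct and rests on exactly the same ingredients as the paper's proof: the $L^2$ convergence of Proposition~\ref{prop:conv-expitH} (applied to $(\Delta^2+1)^{-1}\phi$), the uniform $L^6$ decay of Proposition~\ref{prop:bdd-expitH} (transferred to $f_\lambda$ via~(\ref{H(l)-2})), H\"older-type inequalities, and dominated convergence in~$t$. The only difference is organizational. The paper bounds $\bigl|\Phi(\lambda,y)-\|e^{it\Delta}\phi\|_{(4,4)}^4\bigr|$ directly by telescoping the quartic integrand and applying H\"older with the exponents $(2,6,6,6)$: each of the four resulting time-integrands is of the form $\|\text{difference}\|\cdot(\text{three }L^6\text{ factors})\le C(\phi)\|\text{difference}\|\langle t\rangle^{-3}$, and one invokes dominated convergence immediately. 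Your route---first $L^1$-convergence of the product $f_\lambda g_\lambda$, then interpolation against the uniform $L^3$ bound to upgrade to $L^2$-convergence---reaches the same conclusion but inserts an extra interpolation step that the telescoping avoids. Either way part~(ii) follows by the same $L^2$--$L^6$ interpolation you wrote down; the paper simply says ``similarly.''
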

\begin{proof}[Proof of Lemma \ref{lem-Phi}]
It follows from the H\"{o}lder inequality that 
\begin{align*}
&\left|
\Phi(\lambda,y) -\| e^{it\Delta}\phi \|_{(4,4)}^4
\right| \\
&\quad\le 
\int_{\mathbb{R}^{1+3}}
\left(
\bigl| 
e^{-itH(\lambda,y)}\tau_{\lambda^{-1}y}\phi (x) 
\bigl|^2 
\bigl| e^{-itH(\lambda)}\phi(x) \bigl|^2 
-
|e^{it\Delta}\phi(x) |^4
\right)
d(t,x) \\
&\quad\le
\int_{\mathbb{R}^{1+3}}
\bigl| 
e^{-itH(\lambda,y)}\tau_{\lambda^{-1}y}\phi (x) 
-
e^{it\Delta}\phi(x)
\bigl|
\bigl| 
e^{-itH(\lambda,y)}\tau_{\lambda^{-1}y}\phi (x) 
\bigl|
\bigl| e^{-itH(\lambda)}\phi(x) \bigl|^2 
d(t,x) \\
&\qquad +
\int_{\mathbb{R}^{1+3}}
\bigl| 
e^{it\Delta}\phi (x) 
\bigl|
\bigl| 
e^{-itH(\lambda,y)}\tau_{\lambda^{-1}y}\phi (x) 
-
e^{it\Delta}\phi(x)
\bigl|
\bigl| e^{-itH(\lambda)}\phi(x) \bigl|^2 
d(t,x) \\
&\qquad +
\int_{\mathbb{R}^{1+3}}
\bigl| 
e^{it\Delta}\phi(x)
\bigl|^2
\bigl| 
e^{-itH(\lambda)}\phi (x) 
-
e^{it\Delta}\phi(x)
\bigl|
\bigl| e^{-itH(\lambda)}\phi(x) \bigl|
d(t,x) \\
&\qquad +
\int_{\mathbb{R}^{1+3}}
\bigl| 
e^{it\Delta}\phi(x)
\bigl|^3
\bigl| 
e^{-itH(\lambda)}\phi (x) 
-
e^{it\Delta}\phi(x)
\bigl|
d(t,x) \\
&\quad\le
\int_{\mathbb{R}}
\left\|
e^{-itH(\lambda,y)}\tau_{\lambda^{-1}y}\phi  
-
e^{it\Delta}\phi
\right\| 
\left\|
e^{-itH(\lambda)}\phi  
\right\|_6^3 
dt \\
&\qquad +
\int_{\mathbb{R}}
\left\|
e^{it\Delta}\phi  
\right\|_6
\left\|
e^{-itH(\lambda,y)}\tau_{\lambda^{-1}y}\phi  
-
e^{it\Delta}\phi
\right\| 
\left\|
e^{-itH(\lambda)}\phi  
\right\|_6^2 
dt \\
&\qquad +
\int_{\mathbb{R}}
\left\|
e^{it\Delta}\phi  
\right\|_6^2
\left\|
e^{-itH(\lambda)}\phi  
-
e^{it\Delta}\phi
\right\| 
\left\|
e^{-itH(\lambda)}\phi  
\right\|_6
dt \\
&\qquad +
\int_{\mathbb{R}}
\left\|
e^{it\Delta}\phi  
\right\|_6^3
\left\|
e^{-itH(\lambda)}\phi  
-
e^{it\Delta}\phi
\right\| 
dt 
\end{align*} 
where we have used the equality 
\[
\left\| 
e^{-itH(\lambda,y)}\tau_{\lambda^{-1}y}\phi
\right\|_6
=
\left\| 
e^{-itH(\lambda)}\phi 
\right\|_6,
\]
which is given by (\ref{H(l)-2}), 
in the last inequality.
We can easily see that 
\[
\left\|
e^{it\Delta}\phi  
\right\|_6 
\le 
\langle t \rangle^{-1}
\left(
\| \nabla\phi\| 
+
\| \phi\|_{6/5}
\right) .
\]
Therefore, 
by Propositions 
\ref{prop:conv-expitH} and \ref{prop:bdd-expitH} 
and 
by applying the Lebesgue dominated theorem 
with respect to the variable $t$, 
we obtain (i). 
Similarly, we have (ii). 
\end{proof}

Let us go back to the proof of Theorem \ref{mainthm1}.
Henceforth, we suppose that 
$\phi\in C^\infty_c(\mathbb{R}^3\setminus \{ 0\})$ 
satisfies $\phi\neq 0$ and 
$(\Delta^2 +1)^{-1}\phi\in 
C^\infty_c(\mathbb{R}^3\setminus \{ 0\})$.
Using the above Lemma \ref{lem-Phi} 
and Prop \ref{norm-of-Yukawa},(iii),  
we see from the Lebesgue dominated theorem 
with respect to the variable $y$
that 
\begin{align*}
\lim_{\lambda \to\infty}(I)_\lambda 
&= 
\| e^{it\Delta}\phi \|_{(4,4)}^4\int_{\mathbb{R}^3}
Q_1\frac{\exp(-\mu_1 |y|)}{|y|}dy \\
&=
4\pi
\frac{Q_1}{\mu_1^2}
\| e^{it\Delta}\phi\|_{(4,4)}^4,
\end{align*} 
which implies (\ref{recon-1}). \\

We next show (\ref{recon-2}). 
Suppose that $Q_1\neq 0$. 
Recall the definition of 
$b$, $\Psi$, $m_0$ and $q_j$, $j=1,2,\cdots$. 
It follows from Propositions \ref{prop:SAL} and \ref{prop:3-1}, 
that 
\begin{align*}
a
=
b^{-7}K[\phi_b]
&=
b^{-7}Q_1
\int_{\mathbb{R}}
\left\langle
\frac{\exp(-\mu_1 r)}{r}\ast 
\left|
e^{-itH}\phi_b
\right|^2, 
\left|
e^{-itH}\phi_b
\right|^2
\right\rangle 
dt\\
&=
b^{-7}Q_1
\int_{\mathbb{R}}
\left\langle
\frac{\exp(-\mu_1 r)}{r}\ast 
\left|
\left(
e^{-itb^{-2}H(b)}\phi
\right)_b
\right|^2
, 
\left|
\left(
e^{-itb^{-2}H(b)}\phi
\right)_b
\right|^2
\right\rangle 
dt\\
&=
b^{-7+2+3+3}Q_1
\int_{\mathbb{R}}
\left\langle
\frac{\exp(-b\mu_1 r)}{br}\ast 
\left|
e^{-itH(b)}\phi
\right|^2
, 
\left|
e^{-itH(b)}\phi
\right|^2
\right\rangle 
dt\\
&=
Q_1
\int_{\mathbb{R}}
\left\langle
\frac{\exp(-\sqrt{|Q_1|}r)}{r}\ast 
\left|
e^{-itH(b)}\phi
\right|^2, 
\left|
e^{-itH(b)}\phi
\right|^2
\right\rangle
dt \\
&=
\Psi_1(Q_1).
\end{align*}
By the Plancherel theorem, 
we have 
\begin{align*}
\Psi(\alpha) 
=
4\pi
\int_{\mathbb{R}}
\int_{\mathbb{R}^3}
\frac{\alpha}{|\alpha|+|\xi|^2}
\left|
\left(
\mathfrak{F}
\left|
e^{-itH(b)}\phi
\right|^2
\right)
(\xi)
\right|^2
d\xi dt,
\end{align*} 
where $\mathfrak{F}$ denotes 
the Fourier transform on $\mathcal{H}$:
\[
\mathfrak{F}\varphi(\xi)
:=
(2\pi)^{-3/2}
\int_{\mathbb{R}^3}
e^{-ix\cdot \xi}\varphi(x)dx, 
\quad
\varphi\in L^1\cap \mathcal{H}.
\]
Therefore, 
$\Psi:\mathbb{R}\to\Psi(\mathbb{R})$ 
is odd, continuous, bijective  
and monotonically increasing. 
Thus, we obtain 
\[
\Psi\left( 
m_0+\sum_{k=1}^j
\frac{q_k}{2^k}
\right)
\le 
\Psi(|Q_1|)
<
\Psi\left( 
m_0+\sum_{k=1}^j
\frac{q_k}{2^k}
+\frac{1}{2^j}
\right) 
\]
and 
\[
m_0+\sum_{k=1}^j
\frac{q_k}{2^k}
\le 
|Q_1|
<
m_0+\sum_{k=1}^j
\frac{q_k}{2^k}
+\frac{1}{2^j}.
\]
Hence (\ref{recon-2}) holds. 
\end{proof}
\section{Application}\label{sec:application}
In this section, 
we consider the inverse scattering problem 
for the semi-relativistic Hartree equation 
\begin{equation}\label{SRH}
\Bigl(
i\partial_t +\sqrt{1-\Delta}
\Bigr)
w=
F_2(w),\quad 
(t,x)\in {\mathbb R}^{1+3}.\tag{SRH}
\end{equation}
Here, 
\[
F_2(w)=\biggl(
Q_2 \frac{\exp(-\mu_2 r)}{r} \ast |w|^2
\biggr) w.
\]
The equation (\ref{SRH}) is used to 
describe Boson stars. 
For the detailed physical background, 
see Lenzmann \cite{Lenzmann}. 

There is no result for 
the inverse scattering problem 
for the nonlinear semi-relativistic equation.  
Instead, we review 
the inverse scattering problem 
for the nonlinear Klein-Gordon equation. 
Morawetz--Strauss \cite{Morawetz-Strauss} 
initially studied  
the inverse scattering problem for 
the Klein-Gordon equation 
with power nonlinearity.  
Later, Bachelot \cite{Bachelot} 
considered more general cases. 
Weder \cite{Weder00-1,Weder02} proved that  
a more general class of nonlinearities 
is uniquely reconstructed, 
and moreover, 
a method is given for the unique reconstruction 
of the potential that acts as a linear operator 
and that this problem was not considered 
in \cite{Morawetz-Strauss,Bachelot}.
The inverse scattering problem for 
the Klein-Gordon equation 
with a cubic convolution 
\begin{align*}
\partial_t^2 w -\Delta w +w =
\bigl( V\ast |w|^2 \bigr) w, 
\quad 
\text{in $(t,x)\in \mathbb{R}^{1+n}$} 
\end{align*} 
was initially studied by \cite{Sasaki-Watanabe}. 
In the case where $V$ satisfies
$V=\lambda |x|^{-\sigma}$ 
for some $\sigma$ and $\lambda$, 
\cite{Sasaki 2007} proved that 
$V$ can be recovered. 
Unfortunately, 
as far as the author knows, 
there is no known method  
to recover the nonlinearity $F_2(w)$. 

We shall  
determine the value of $Q_2$ and $\mu_2$  
from the knowledge of the scattering operator 
given by the following Proposition: 
\begin{prop}\label{prop:direct2}
Let $s\ge 5/6$. 
Put $U_2(t)=e^{-it\sqrt{1-\Delta}}$,  
$X_2=H^{s}$, 
$Y_2=L^2(\mathbb{R};H_6^{s-5/6}(\mathbb{R}^3))$ 
and 
$Z_2=C(\mathbb{R};X_2)\cap Y_2$.
Then there exists some $\delta>0$ 
satisfying the following properties: 

If $\phi_-\in B(\delta;X_2)$, 
then there uniquely exist 
$w\in Z_2$ and $\phi_+\in X_2$ 
such that 
\begin{align}
w(t) &=U_2(t)\phi_- 
+
\frac{1}{i}\int^{t}_{-\infty}
U_2(t-\tau)F_2(w(\tau))d\tau ,\\
\phi_+ &=\phi_- +
\frac{1}{i}\int_{\mathbb{R}}
U_2(-t)F_2(w(t))dt ,\\
\| w\|_{Z_2}
&\le C\| 
\phi_-\|_{X_2},\\
\| w-U_2(t)\phi_-\|_{Z_2}
&\le C\| 
\phi_-\|_{X_2}^3,\\
\lim_{t\to\pm\infty}&
\| w(t)-U_2(t)\phi_\pm\|_{X_2}=0.
\end{align} 
Therefore, 
we can define the scattering operator 
for (\ref{SRH})
\[
S_2:B(\delta;X_2)\ni \phi_- 
\mapsto \phi_+ \in X_2 .
\]
\end{prop}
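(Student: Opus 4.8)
The plan is to run the contraction‑mapping scheme used in the proof of Theorem~\ref{thm:direct}, with $e^{-itH}$ and Propositions~\ref{prop:decay} and~\ref{prop:st} replaced by the semi‑relativistic propagator $U_2(t)=e^{-it\sqrt{1-\Delta}}$ and the endpoint Klein--Gordon Strichartz estimate of Machihara--Nakanishi--Ozawa~\cite{Machihara-Nakanishi-Ozawa}. First I would record the homogeneous endpoint estimate
\[
\|U_2(t)\phi\|_{Y_2}
=\|U_2(t)\phi\|_{L^2(\mathbb{R};H_6^{s-5/6}(\mathbb{R}^3))}\le C\|\phi\|_{H^s},
\]
the counterpart of \eqref{st1}, which carries the $5/6$‑derivative loss of the $(2,6)$ pair for the Klein--Gordon equation in three dimensions. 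Since $U_2(t)$ is unitary on every $H^\sigma(\mathbb{R}^3)$, the trivial energy identity $\|U_2(t)\phi\|_{H^s}=\|\phi\|_{H^s}$ holds; combining it with the above homogeneous estimate through Minkowski's integral inequality (so that no Christ--Kiselev device is needed at the endpoint) yields the retarded bound
\[
\left\|\int_{-\infty}^{t}U_2(t-\tau)f(\tau)\,d\tau\right\|_{Z_2}\le C\|f\|_{L^1(\mathbb{R};H^s(\mathbb{R}^3))},
\]
the substitute for \eqref{st2}; the crude right‑hand side suffices here because $F_2$ is cubic and the convolution is smoothing.

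Next, set $G(x)=Q_2\exp(-\mu_2|x|)/|x|$ and $\Psi_2[w](t)=U_2(t)\phi_-+\frac{1}{i}\int_{-\infty}^{t}U_2(t-\tau)F_2(w(\tau))\,d\tau$. The decisive ingredient is the nonlinear estimate
\[
\|F_2(w)(t)\|_{H^s}\le C\,\|w(t)\|_{H_6^{s-5/6}}^2\,\|w(t)\|_{H^s},
\]
with the obvious difference analogue. I would prove it by the fractional Leibniz rule, which reduces the matter to the multiplier bound $\|G\ast|w|^2\|_{W^s_\infty}\le C\|w\|_{H_6^{s-5/6}}^2$; for the latter I would use that convolution with $G$ equals, up to the constant $4\pi Q_2$, the resolvent $(-\Delta+\mu_2^2)^{-1}$ and therefore gains two derivatives, after which the Hardy--Littlewood--Sobolev inequality and the Sobolev embeddings of $H_6^{s-5/6}(\mathbb{R}^3)$ absorb that gain uniformly for $s\ge5/6$. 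Integrating in time and using $w\in Z_2=C(\mathbb{R};H^s)\cap L^2(\mathbb{R};H_6^{s-5/6})$ then gives $\|F_2(w)\|_{L^1(\mathbb{R};H^s)}\le C\|w\|_{C(\mathbb{R};H^s)}\|w\|_{Y_2}^2\le C\|w\|_{Z_2}^3$, exactly the analogue of the bound on $\|F(u)\|_{(1,2)}$ used for Theorem~\ref{thm:direct}.

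With the linear and nonlinear estimates in hand, $\|\Psi_2[w]\|_{Z_2}\le C(\|\phi_-\|_{X_2}+\|w\|_{Z_2}^3)$ and $\|\Psi_2[w]-\Psi_2[\widetilde w]\|_{Z_2}\le C(\|w\|_{Z_2}+\|\widetilde w\|_{Z_2})^2\|w-\widetilde w\|_{Z_2}$, so for $\delta$ small enough $\Psi_2$ is a contraction on the ball of radius $2C\delta$ in $Z_2$. Its unique fixed point $w$ solves the Duhamel equation, lies in $C(\mathbb{R};H^s)$, and satisfies $\|w\|_{Z_2}\le C\|\phi_-\|_{X_2}$ and $\|w-U_2(t)\phi_-\|_{Z_2}\le C\|F_2(w)\|_{L^1(\mathbb{R};H^s)}\le C\|\phi_-\|_{X_2}^3$. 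Because $F_2(w)\in L^1(\mathbb{R};H^s)$, the vector $\phi_+:=\phi_-+\frac{1}{i}\int_{\mathbb{R}}U_2(-\tau)F_2(w(\tau))\,d\tau$ is well‑defined in $H^s$; writing $w(t)-U_2(t)\phi_-=\frac{1}{i}\int_{-\infty}^{t}U_2(t-\tau)F_2(w(\tau))\,d\tau$ and $w(t)-U_2(t)\phi_+=-\frac{1}{i}\int_{t}^{\infty}U_2(t-\tau)F_2(w(\tau))\,d\tau$, unitarity of $U_2$ gives $\|w(t)-U_2(t)\phi_\pm\|_{H^s}\le\int_{I_\pm(t)}\|F_2(w)(\tau)\|_{H^s}\,d\tau\to0$ as $t\to\pm\infty$ by dominated convergence, where $I_-(t)=(-\infty,t)$ and $I_+(t)=(t,\infty)$. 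Uniqueness of $\phi_\pm$, and hence the well‑definedness of $S_2$, follow as usual.

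I expect the only genuinely new difficulty to be the nonlinear estimate, and within it the sharp matching of the $5/6$‑derivative loss of the Klein--Gordon endpoint Strichartz estimate against the two‑derivative smoothing of the Yukawa convolution; once $\|G\ast|w|^2\|_{W^s_\infty}\le C\|w\|_{H_6^{s-5/6}}^2$ is established, the rest is a transcription of the proof of Theorem~\ref{thm:direct}.
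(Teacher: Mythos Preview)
Your proposal is correct and matches the approach the paper indicates: the paper does not give a proof but simply remarks that Proposition~\ref{prop:direct2} follows from the proof of Theorem~3.4 in Cho--Ozawa~\cite{Cho-Ozawa}, and in the introduction points to the endpoint Klein--Gordon Strichartz estimate of Machihara--Nakanishi--Ozawa~\cite{Machihara-Nakanishi-Ozawa} as the key tool. Your outline---homogeneous endpoint Strichartz plus unitarity to get the retarded $L^1_tH^s\to Z_2$ bound, the cubic nonlinear estimate exploiting the two-derivative smoothing of $(\mu_2^2-\Delta)^{-1}$, and then the contraction argument parallel to Theorem~\ref{thm:direct}---is exactly a faithful elaboration of that reference.
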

\begin{rem}  
We can easily show 
Proposition \ref{prop:direct2} 
by following the proof of 
Theorem 3.4 in 
Cho--Ozawa \cite{Cho-Ozawa}.
\end{rem}
From the knowledge of $(\phi_-,S_2(\phi_-))$,  
we give the following formula 
for determining $Q_2/\mu_2^2$:  
\begin{theorem}\label{mainthm2}
Let $s$ be a positive number given by 
Proposition \ref{prop:direct2}. 
Let $1\le p<12/7$ and $k>11/12$. 
Assume that 
\[
\phi \in 
\bigl( 
H^s(\mathbb{R}^3) \cap H^k_p(\mathbb{R}^3)
\bigr) \setminus \{ 0 \} .
\]
\begin{enumerate}[(i)]
  \item we have 
\begin{equation}\label{recon-3}
\frac{Q_2}{\mu_2^2}=
\frac{
\displaystyle{
\lim_{\lambda\to\infty}
i\lambda^4
\Big\langle 
(S_2 -id)
(\lambda^{-3}\phi_\lambda ), 
\phi_\lambda
\Big\rangle
}}{\displaystyle{
4\pi 
\| e^{i\frac{1}{2}\Delta} \phi}\|_{(4,4)}^4} .
\end{equation}
  \item Put 
\begin{align*}
d&=
\left|
\frac{Q_2}{\mu_2^2}
\right|^{1/2},\\
\Psi_2(\alpha)&=
\int_{\mathbb{R}}
\left\langle
\frac{\alpha\exp(-\sqrt{|\alpha|}r)}{r}\ast 
\left|
e^{-it\sqrt{d^2-\Delta}}\phi
\right|^2, 
\left|
e^{-it\sqrt{d^2-\Delta}}\phi
\right|^2
\right\rangle
dt,
\quad
\alpha\in\mathbb{R},\\
h&=
\lim_{\varepsilon\to 0}
i\varepsilon^{-3}d^{-6}
\left\langle
(S_2 -id)(\varepsilon\phi_d), 
\phi_d
\right\rangle, \\
l_0&=
\max\left\{
l\in\mathbb{Z}_{\ge 0};
\Psi_2(l)\le |h|
\right\},\\
p_1&=
\max\left\{
p=0,1;
\Psi_2\left(
l_0+\frac{p}{2}
\right)
\le |h|
\right\},\\
p_{j+1}&=
\max\left\{
p=0,1;
\Psi_2\left(
l_0+\sum_{k=1}^j\frac{p_k}{2^k}+\frac{p}{2^{j+1}}
\right)
\le |h|
\right\},\quad j=1,2,\cdots .
\end{align*} 
Then we have 
\begin{align}\label{recon-4}
Q_2=\mathrm{sign}
\left( \frac{Q_2}{\mu_2^2}\right) 
\left(
l_0+\sum_{j=1}^\infty\frac{p_j}{2^j}
\right) .
\end{align}
\end{enumerate}
\end{theorem}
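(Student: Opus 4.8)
The plan is to transcribe the proof of Theorem~\ref{mainthm1} into the semi-relativistic setting, the r\^ole played there by the concentration limit of Proposition~\ref{prop:conv-expitH} (and the uniform bound of Proposition~\ref{prop:bdd-expitH}) being taken over, in one stroke, by the non-relativistic limit (\ref{pre-NRL}). Two preliminaries are needed. First, the small amplitude limit for $S_2$: from the integral equation of Proposition~\ref{prop:direct2}, the identity $U_2(t)^{\ast}=U_2(-t)$, the homogeneity $F_2(\varepsilon u)=\varepsilon^{3}F_2(u)$, and the bound $\|w-U_2(t)\phi_-\|_{Z_2}\le C\|\phi_-\|_{X_2}^{3}$, one gets, exactly as in Proposition~\ref{prop:SAL}, that for every $\psi\in X_2$
\[
K_2[\psi]:=\lim_{\varepsilon\to0}\frac{i}{\varepsilon^{3}}\big\langle (S_2-id)(\varepsilon\psi),\psi\big\rangle
=\int_{\mathbb{R}}\big\langle F_2\big(U_2(t)\psi\big),U_2(t)\psi\big\rangle\,dt ,
\]
the point being that $\phi\in H^{s}$ ($s\ge5/6$) guarantees $t\mapsto\|U_2(t)\phi\|_{4}\in L^{4}(\mathbb{R})$, i.e.\ the Strichartz information behind Proposition~\ref{prop:direct2}. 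Second, the scaling identity
\[
\big(U_2(t)\phi_{\lambda}\big)(x)=\big(e^{-i(t/\lambda)\sqrt{\lambda^{2}-\Delta}}\phi\big)(\lambda^{-1}x),
\]
the exact analogue of Proposition~\ref{prop:3-1}\,(i), obtained at once by taking Fourier transforms.

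For part~(i), take $\lambda$ large enough that $\|\lambda^{-3}\phi_{\lambda}\|_{X_2}\le\lambda^{-3/2}\|\phi\|_{X_2}<\delta$, put $w^{0}_{\lambda}=U_2(t)(\lambda^{-3}\phi_{\lambda})$, $\widetilde{w^{0}_{\lambda}}=U_2(t)\phi_{\lambda}$, let $w_{\lambda}$ solve the integral equation with datum $\lambda^{-3}\phi_{\lambda}$, and set $w^{1}_{\lambda}=w_{\lambda}-w^{0}_{\lambda}$; since $\|\phi_{\lambda}\|_{H^{s}}\le C\lambda^{3/2}\|\phi\|_{H^{s}}$ ($\lambda\ge1$), Proposition~\ref{prop:direct2} gives $\|w^{1}_{\lambda}\|_{Z_2}\le C\lambda^{-9/2}$. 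Expanding $i\lambda^{4}\langle (S_2-id)(\lambda^{-3}\phi_{\lambda}),\phi_{\lambda}\rangle$ into the contribution $(I)_{\lambda}$ of $w^{0}_{\lambda}$ alone plus three remainders each carrying one factor $w^{1}_{\lambda}$, the same power counting as in Theorem~\ref{mainthm1} makes the remainders tend to $0$. In
\[
(I)_{\lambda}=\lambda^{-5}\int_{\mathbb{R}^{7}}Q_2\,\frac{\exp(-\mu_2|y|)}{|y|}\,\big|U_2(t)\phi_{\lambda}(x-y)\big|^{2}\,\big|U_2(t)\phi_{\lambda}(x)\big|^{2}\,d(t,x,y)
\]
I insert the scaling identity and change variables $t\mapsto\lambda^{2}t$, $x\mapsto\lambda x$ (keeping $y$); the modulus-one factor $e^{it\lambda^{2}}$ may be put in without changing anything, since only $|g_{\lambda}|$ occurs, and one arrives at
\[
(I)_{\lambda}=\int_{\mathbb{R}^{3}}Q_2\,\frac{\exp(-\mu_2|y|)}{|y|}\,\Phi_2(\lambda,y)\,dy,\qquad
\Phi_2(\lambda,y)=\int_{\mathbb{R}^{1+3}}\big|g_{\lambda}(t,x-\lambda^{-1}y)\big|^{2}\big|g_{\lambda}(t,x)\big|^{2}\,d(t,x),
\]
$g_{\lambda}:=e^{it\lambda^{2}-it\lambda\sqrt{\lambda^{2}-\Delta}}\phi$. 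By (\ref{pre-NRL}), $\|g_{\lambda}-e^{i\frac12\Delta}\phi\|_{(4,4)}\to0$; since translation in $x$ is continuous on $L^{4}(\mathbb{R}^{1+3})$ and $|\Phi_2(\lambda,y)|\le\|g_{\lambda}\|_{(4,4)}^{4}$ is bounded for $\lambda$ large, dominated convergence in $y$ (the Yukawa kernel being integrable, $\int_{\mathbb{R}^{3}}\exp(-\mu_2|y|)|y|^{-1}\,dy=4\pi\mu_2^{-2}$) gives $(I)_{\lambda}\to4\pi(Q_2/\mu_2^{2})\|e^{i\frac12\Delta}\phi\|_{(4,4)}^{4}$, which is (\ref{recon-3}).

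For part~(ii) (so $Q_2\ne0$, since $d$ must be defined; if $Q_2=0$, part~(i) already detects it), apply the small amplitude limit with $\psi=\phi_{d}$, $d=|Q_2/\mu_2^{2}|^{1/2}$; inserting the scaling identity and rescaling $x\mapsto d\,x$, $y\mapsto d\,y$, $t\mapsto d\,t$, and using $\mu_2 d=\sqrt{|Q_2|}$, the prefactors collapse to $d^{6}$ and
\[
h=d^{-6}K_2[\phi_{d}]=Q_2\int_{\mathbb{R}}\Big\langle\frac{\exp(-\sqrt{|Q_2|}\,r)}{r}\ast\big|e^{-it\sqrt{d^{2}-\Delta}}\phi\big|^{2},\ \big|e^{-it\sqrt{d^{2}-\Delta}}\phi\big|^{2}\Big\rangle dt=\Psi_2(Q_2).
\]
By the Plancherel theorem $\Psi_2(\alpha)=4\pi\int_{\mathbb{R}}\int_{\mathbb{R}^{3}}\frac{\alpha}{|\alpha|+|\xi|^{2}}\big|\mathfrak{F}\,|e^{-it\sqrt{d^{2}-\Delta}}\phi|^{2}(\xi)\big|^{2}\,d\xi\,dt$, and since $e^{-it\sqrt{d^{2}-\Delta}}$ is unitary and $\phi\ne0$, the map $\Psi_2:\mathbb{R}\to\Psi_2(\mathbb{R})$ is odd, continuous and strictly increasing; hence $|Q_2|=\Psi_2^{-1}(|h|)$ and, by strict monotonicity, the displayed bisection produces the binary digits of $|Q_2|$: $l_0+\sum_{j=1}^{\infty}p_j2^{-j}=|Q_2|$. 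As $\mathrm{sign}(Q_2/\mu_2^{2})=\mathrm{sign}(Q_2)$ this is (\ref{recon-4}), and $\mu_2=\sqrt{|Q_2|}/d$ is then recovered as well.

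The main obstacle is the non-relativistic limit (\ref{pre-NRL}) itself, on which the whole argument rests: the pointwise symbol convergence $\lambda\sqrt{\lambda^{2}+|\xi|^{2}}-\lambda^{2}\to|\xi|^{2}/2$ must be upgraded to strong convergence of $e^{it\lambda^{2}-it\lambda\sqrt{\lambda^{2}-\Delta}}\phi$ in the space-time norm $\|\cdot\|_{(4,4)}$, robustly enough to survive the translation by $\lambda^{-1}y$ and the subsequent $y$-integration. This is where the endpoint Strichartz estimate for the Klein--Gordon equation of \cite{Machihara-Nakanishi-Ozawa} and the extra regularity $\phi\in H^{k}_{p}$ ($1\le p<12/7$, $k>11/12$) enter—tailored precisely so that (\ref{pre-NRL}) holds and, a fortiori, so that $\|g_{\lambda}\|_{(4,4)}$ stays bounded for large $\lambda$. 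Once (\ref{pre-NRL}) is in hand, everything else is a faithful transcription of the proof of Theorem~\ref{mainthm1}, with $\Phi_2$ playing the r\^ole of $\Phi$, the two displayed properties of $\Phi_2$ that of Lemma~\ref{lem-Phi}, and the free Schr\"{o}dinger group $e^{i\frac12\Delta}$ that of $e^{it\Delta}$.
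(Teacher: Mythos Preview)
Your proposal is correct and follows essentially the same route as the paper: reduce to the main term $(I)_\lambda$ as in Theorem~\ref{mainthm1}, rescale to bring out $g_\lambda=U^\lambda(t)\phi$, and pass to the limit via the non-relativistic limit Lemma~\ref{lem:NRL} together with dominated convergence in $y$; part~(ii) is then the verbatim bisection argument from the proof of~(\ref{recon-2}) with the scaling $t\mapsto dt$ (rather than $t\mapsto b^{2}t$) accounting for the $d^{-6}$ in place of $b^{-7}$. One small correction: the uniform-in-$\lambda$ $L^4_tL^4_x$ bound needed for~(\ref{pre-NRL}) is obtained in the paper from Brenner's $L^p$--$L^{p'}$ decay estimate~\cite{Brenner} plus complex interpolation, not from the endpoint Strichartz estimate of~\cite{Machihara-Nakanishi-Ozawa} (the latter is used only for the direct problem in Proposition~\ref{prop:direct2}).
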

\subsection{Proof of Theorem \ref{mainthm2}}
In order to show Theorem \ref{mainthm2}, 
we first prepare the following lemma:
\begin{lem}\label{lem:NRL}
For $\lambda>0$, 
let 
\[
U^\lambda(t) =
e^{it\lambda^2 -it\lambda\sqrt{\lambda^2-\Delta}}, 
\quad 
U^\infty(t) = 
e^{i\frac{1}{2}\Delta}.
\]
Assume that $1\le p<12/7$ and $k>11/12$. 
If $\phi\in H^s(\mathbb{R}^3)\cap H^k_p(\mathbb{R}^3)$, 
then we have 
\begin{equation}\label{NRL}
\lim_{\lambda\to\infty}
\| U^\lambda(t)\phi - 
   U^\infty(t)\phi \|_{(4,4)}=0.
\end{equation}
\end{lem}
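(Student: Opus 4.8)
The plan is to prove \eqref{NRL} in three steps: (a) a conformal rescaling that turns $U^\lambda$ into a half-Klein--Gordon propagator; (b) a Strichartz bound for $U^\lambda$ that is \emph{uniform} in $\lambda$; (c) a direct verification of the convergence on a dense class of data, then closing the gap by density. The rescaling identity in (a) comes from writing $\lambda^2-\lambda\sqrt{\lambda^2+|\xi|^2}=\lambda^2\mu(\xi/\lambda)$ with $\mu(\eta)=1-\sqrt{1+|\eta|^2}$ and substituting $\xi=\lambda\eta$ on the Fourier side, which gives
\[
(U^\lambda(t)\phi)(x)=e^{i\lambda^2 t}\,(U_2(\lambda^2 t)\phi_\lambda)(\lambda x),
\qquad
\| U^\lambda(t)\phi\|_{(4,4)}=\lambda^{-5/4}\,\| U_2(t)\phi_\lambda\|_{(4,4)},
\]
where $U_2(t)=e^{-it\sqrt{1-\Delta}}$ is the propagator from Proposition \ref{prop:direct2} and $\phi_\lambda(x)=\phi(x/\lambda)$, whose Fourier transform concentrates near the origin as $\lambda\to\infty$.

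The crux is the uniform estimate $\sup_{\lambda\ge\lambda_0}\| U^\lambda(t)\phi\|_{(4,4)}\le C\|\phi\|_{H^s\cap H^k_p}$. By the identity above this amounts to controlling $\lambda^{-5/4}\| U_2(t)\phi_\lambda\|_{(4,4)}$ by $\|\phi\|_{H^s\cap H^k_p}$ uniformly in $\lambda$, i.e. applying the Klein--Gordon Strichartz estimates underlying Proposition \ref{prop:direct2} (those of Machihara--Nakanishi--Ozawa \cite{Machihara-Nakanishi-Ozawa} and Cho--Ozawa \cite{Cho-Ozawa}) to the dilated datum $\phi_\lambda$. One must use the frequency-refined form of those estimates: splitting $\phi_\lambda$ into its low- ($|\xi|\le1$) and high- ($|\xi|>1$) frequency parts, the low part is handled in the Schr\"{o}dinger-like regime with a $\dot H^{1/4}$-type derivative loss, the high part in the wave-like regime, and, tracking the dilation exponents, the prefactor $\lambda^{-5/4}$ exactly absorbs the growth of the relevant norms of $\phi_\lambda$ precisely when $1\le p<12/7$, $k>11/12$ and $s\ge 5/6$. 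Checking that this rescaling does not leak a positive power of $\lambda$ --- which is what pins down the admissible range of $(p,k,s)$ --- is, I expect, the main obstacle.

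For the direct part I would first treat $\phi$ in the subclass of data whose Fourier transform has compact support, which is dense in $H^s\cap H^k_p$ since $1\le p<\infty$; say $\mathfrak{F}\phi$ is supported in $|\xi|\le R$. For $\lambda\ge\max(R,1)$ the function $\phi_\lambda$ is then Fourier-supported in $|\xi|\le1$, so the kernel of $U_2(\lambda^2 t)$ restricted there obeys the stationary-phase bound coming from the nondegeneracy of the phase $\sqrt{1+|\xi|^2}$, whence $\| U^\lambda(t)\phi\|_\infty=\| U_2(\lambda^2 t)\phi_\lambda\|_\infty\le C\langle t\rangle^{-3/2}\|\phi\|_1$ uniformly in $\lambda$; interpolating with $\| U^\lambda(t)\phi\|_2=\|\phi\|_2$ gives $\| U^\lambda(t)\phi\|_4+\| U^\infty(t)\phi\|_4\le C(\phi)\langle t\rangle^{-3/4}$, which lies in $L^4(\mathbb{R}_t)$ and is uniform in $\lambda$. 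On the other hand, for each fixed $t$ the difference $(U^\lambda(t)-U^\infty(t))\phi$ has Fourier support in $|\xi|\le R$, so by Bernstein's inequality and the pointwise limit $e^{i\lambda^2 t\mu(\xi/\lambda)}\to e^{-\frac{i}{2}t|\xi|^2}$,
\[
\|(U^\lambda(t)-U^\infty(t))\phi\|_4
\lesssim_R
\bigl\|\bigl(e^{i\lambda^2 t\mu(\xi/\lambda)}-e^{-\frac{i}{2}t|\xi|^2}\bigr)\mathfrak{F}\phi\bigr\|_{L^2_\xi}
\longrightarrow 0
\qquad(\lambda\to\infty)
\]
by dominated convergence in $\xi$. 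Splitting $\int_{\mathbb{R}}\|\,\cdot\,\|_4^4\,dt$ at $|t|=T$ --- using the last display and dominated convergence on $|t|\le T$, and the integrable majorant on $|t|>T$ to make the tail uniformly small before letting $T\to\infty$ --- proves \eqref{NRL} for such $\phi$. Finally, for arbitrary $\phi\in H^s\cap H^k_p$ choose $\psi$ with $\mathfrak{F}\psi$ compactly supported and $\|\phi-\psi\|_{H^s\cap H^k_p}$ small; by the uniform estimate above $\| U^\lambda(t)(\phi-\psi)\|_{(4,4)}$ and $\| U^\infty(t)(\phi-\psi)\|_{(4,4)}$ are correspondingly small for all $\lambda$, while $\|(U^\lambda(t)-U^\infty(t))\psi\|_{(4,4)}\to 0$, so the triangle inequality completes the proof.
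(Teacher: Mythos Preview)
Your outline is viable, but the decisive step~(b) --- the uniform-in-$\lambda$ Strichartz bound $\sup_{\lambda\ge\lambda_0}\|U^\lambda(t)\phi\|_{(4,4)}\le C\|\phi\|_{H^s\cap H^k_p}$ --- is only asserted, with the scaling accounting and the high/low frequency splitting explicitly deferred as ``the main obstacle.'' Since your density closure at the end rests entirely on this bound, the proof as written is incomplete.

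The paper obtains the same uniform control by a shorter and more elementary route, and in doing so avoids both the frequency decomposition and the density argument. After noting (as you do) that Plancherel plus the embedding $H^{3/4}\hookrightarrow L^4$ give the pointwise-in-$t$ convergence $\|(U^\lambda(t)-U^\infty(t))\phi\|_4\to 0$, the paper uses your rescaling identity together with Brenner's $L^p$--$L^q$ estimate for $e^{-it\sqrt{1-\Delta}}$ to get, directly and uniformly in $\lambda>1$,
\[
\|U^\lambda(t)\phi\|_4\le C|t|^{-3/4}\|\phi\|_{H^{5/4}_{4/3}}.
\]
Complex interpolation between this and the trivial bound $\|U^\lambda(t)\phi\|_4\le C\|\phi\|_{H^{3/4}}$ yields
\[
\|U^\lambda(t)\phi\|_4\le C(1+|t|)^{-3\theta/4}\|\phi\|_{H^{3/4}\cap H^{k_\theta}_{p_\theta}},
\qquad k_\theta=\tfrac34+\tfrac{\theta}{2},\ \ \tfrac{1}{p_\theta}=\tfrac12+\tfrac{\theta}{4},
\]
and the hypotheses $k>11/12$, $p<12/7$ are exactly the condition $\theta>1/3$ that puts $(1+|t|)^{-3\theta/4}$ into $L^4(\mathbb{R}_t)$. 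This single estimate is \emph{simultaneously} the uniform Strichartz bound you seek in~(b) and the $L^4_t$-integrable majorant you build in~(c), so a single application of dominated convergence in $t$ finishes the proof for every $\phi$ in the hypothesis space --- no split at $|t|=T$, no restriction to Fourier-compactly-supported data, no density step. Your argument in~(c) is essentially this same mechanism restricted to a subclass; once one has the interpolated decay bound globally, the extra machinery is unnecessary.
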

\begin{proof}
From the embedding 
$H^{3/4}(\mathbb{R}^3)\hookrightarrow L^4(\mathbb{R}^3)$ 
and 
the Plancherel theorem 
we obtain 
\begin{align*}
\| U^\lambda(t)\phi - 
   U^\infty(t)\phi \|_4
&\le C
\| U^\lambda(t)\phi - 
   U^\infty(t)\phi \|_{H^{3/4}(\mathbb{R}^3)} \\
&\le C
\Bigl\|
\Bigl(
e^{it\lambda^2 -it\lambda\sqrt{\lambda^2+|\xi|^2}}
-
e^{-it|\xi|^2/2}
\Bigr) 
\left\langle \xi \right\rangle^{3/4}
{\mathcal F}\phi
\Bigl\|_{L^2(\mathbb{R}^3_\xi)} . 
\end{align*} 
Since 
$\left\langle \xi \right\rangle^{3/4}
{\mathcal F}\phi \in {\mathcal H}$ 
and 
\[
\lambda^2 -\lambda\sqrt{\lambda^2+|\xi|^2} 
= 
\frac{-|\xi|^2}{
1+\sqrt{1+|\xi|^2/\lambda^2} 
}
\to 
\frac{-|\xi|^2}{2}
\quad\text{as $\lambda\to\infty$}
\]
for any $\xi\in\mathbb{R}^3$, 
it follows from the Lebesgue dominated 
theorem that 
\begin{equation}\label{NRL-1}
\lim_{\lambda\to\infty}
\| U^\lambda(t)\phi - 
   U^\infty(t)\phi \|_4=0
\quad\text{for any $t\in\mathbb{R}$}.
\end{equation}
Now we put $\lambda>1$. 
By the $L^p-L^q$ estimate 
for the free Klein-Gordon equation 
in \cite{Brenner}, we obtain 
\begin{align}
\| U^\lambda(t)\phi\|_4 
&= 
\Bigl\|
\bigl(
e^{-it\lambda^2\sqrt{1-\Delta}}
\phi_\lambda
\bigr)_{\lambda^{-1}} 
\Bigr\|_4 \nonumber\\
&= 
\lambda^{-3/4} 
\| e^{-it\lambda^2\sqrt{1-\Delta}}
\phi_\lambda \|_4 \nonumber\\
&\le C
\lambda^{-3/4} 
|t\lambda^2|^{-3/4}
\| \phi\|_{H^{5/4}_{4/3}(\mathbb{R}^3)} \nonumber\\
&\le C
|t|^{-3/4}
\| \phi\|_{H^{5/4}_{4/3}(\mathbb{R}^3)} \label{LpLq}.
\end{align} 
Using the complex interpolation method 
for the linear operator $U^\lambda(t)$, 
we see from 
\[
\| U^\lambda(t)\phi\|_4 \le C 
\| U^\lambda(t)\phi\|_{H^{3/4}(\mathbb{R}^3)} \le C
\| \phi\|_{H^{3/4}(\mathbb{R}^3)}
\] 
and (\ref{LpLq}) that 
\begin{equation}\label{interpolation}
\| U^\lambda(t)\phi\|_4 \le C  
(1+|t|)^{-3\theta/4}\| \phi\|_{A_\theta}, 
\end{equation}
where 
\[
A_\theta=H^{3/4}(\mathbb{R}^3)\cap 
H^{k_\theta}_{p_\theta}(\mathbb{R}^3), 
\quad 
k_\theta=\frac{3}{4}+\frac{\theta}{2}, 
\quad  
p_\theta=\frac{1}{2}+\frac{\theta}{4}, 
\quad 
0\le \theta\le 1 .
\] 
Thus, we can easily see that 
the left hand side of 
(\ref{interpolation}) belongs 
$L^4(\mathbb{R})$ if $1/3<\theta\le 1$. 
Therefore, we obtain 
\[
\| U^\lambda(t)\phi - U^\infty(t)\phi \|_4 
\le C g(t),
\]
where $g\in L^4(\mathbb{R})$ 
is some suitable function 
independent of $\lambda$. 
By (\ref{NRL-1}), it follows from 
the Lebesgue dominated theorem 
with respect to time $t$ that (\ref{NRL}) holds. 
\end{proof}
We are ready to prove  
Theorem \ref{mainthm2}. 
\begin{proof}[Proof of Theorem \ref{mainthm2}]
Following the line of the proof of 
(\ref{recon-1}), 
we obtain 
\begin{align*}
\lim_{\lambda\to\infty}&
i\lambda^4 
\Big\langle 
(S_2-id)(\lambda^{-3}\phi_\lambda),
\phi_\lambda
\Big\rangle \\
&=
\lim_{\lambda\to\infty}
i\lambda^{-5} 
\int_{\mathbb{R}}
\Big\langle 
F_2(U_2(t)\phi_\lambda), 
U_2(t)\phi_\lambda
\Big\rangle dt\\
&=
\lim_{\lambda\to\infty}
\int_{\mathbb{R}^{1+2\cdot 3}}
Q_2
\frac{\exp(-\mu_2 |y|)}{|y|}
\bigl|
U^\lambda(t)\phi(x-\lambda^{-1}y)
\bigl|^2 
\bigl|
U^\lambda(t)\phi(x)
\bigl|^2 
d(t,x,y) .
\end{align*}
By (\ref{NRL}), we have (\ref{recon-3}). 
The remaining formula (\ref{recon-4}) can be shown 
by the same argument as the proof of (\ref{recon-3}).
\end{proof}
\appendix
\section{Some Norms of the Yukawa Potential}\label{Appendix}
In this appendix, 
we consider some norms of the Yukawa potential $e^{-r}/r$.
Our claim is the following: 
\begin{prop}\label{norm-of-Yukawa}
\begin{enumerate}[(i)]
  \item The condition (\ref{RK}) is equivalent to (\ref{RK-e}).
  \item We have (\ref{C-b3/2}).
  \item We have $\| e^{-r}/r\|_1=4\pi$.
\end{enumerate}
\begin{proof}
For $1\le p<3$, we obtain 
\[
\left\|
\frac{e^{-r}}{r}
\right\|_p
=
\left(
4\pi p^{p-3}\Gamma(3-p)
\right)^{1/p}.
\]
Here, $\Gamma$ is the usual Gamma function. 
In particular, we see that 
\[
\left\|
\frac{e^{-r}}{r}
\right\|_1
=
4\pi,\qquad
\left\|
\frac{e^{-r}}{r}
\right\|_{3/2}
=
\frac{2^{5/3}\pi}{3}.
\]
Hence we have proved (iii).
By Aubin and Talenti \cite{Aubin,Talenti}, 
the best constant $C_b$ is explicitly given by 
\[
C_b=\frac{1}{3\pi}
\left(
\frac{4}{\sqrt{\pi}}
\right)^{2/3}.
\] 
Thus, we have 
\[
C_b^2\left\|
\frac{e^{-r}}{r}
\right\|_{3/2} 
=
\frac{8\pi^{-1/3}}{9}<1,
\]
which implies (ii). 

We now consider the Rollnik norm of the Yukawa potential. 
Lieb \cite{Lieb} proved that 
the best constant 
for the Hardy-Littlewood-Sobolev inequality 
is given by 
\[
\sup\left\{
\frac{\| \psi\ast r^{-2}\|_3}{\| \psi\|_{3/2}};\ 
\psi\in L^{3/2}(\mathbb{R}^3)\setminus \{0\}
\right\}
=
2^{2/3}\pi^{4/3}.
\]
Therefore, we see from the H\"{o}lder inequality that 
\begin{align}
\left\|
\frac{e^{-r}}{r}
\right\|_R
&=
\sqrt{
\left\langle 
\frac{e^{-r}}{r}\ast\frac{1}{r^2},
\frac{e^{-r}}{r}
\right\rangle
}
\le 
\left\|
\frac{e^{-r}}{r}
\right\|_{3/2}^{1/2}
\left\|
\frac{e^{-r}}{r}\ast\frac{1}{r^2}
\right\|_{3}^{1/2}\nonumber\\
&\le 
2^{1/3}\pi^{2/3}
\left\|
\frac{e^{-r}}{r}
\right\|_{3/2}
=
4\pi \frac{\pi^{2/3}}{3}
<4\pi .\label{Rollnik-Yukawa}
\end{align} 

On the other hand, 
we next consider $\| e^{-r}/r\|_{\mathcal{K}}$.
Since the function $(e^{-r}/r)\ast r^{-1}$ is radial, 
we have 
\begin{align*}
\frac{e^{-r}}{r}\ast \frac{1}{r}(x)
&=
\int_{\mathbb{R}^3}
\frac{\displaystyle{e^{-|y|}}}{\displaystyle{
|y|\sqrt{||x|-y_1|^2+|y_2|^2+|y_3|^2}}}
dy\\
&=
\int_{0}^{\infty}
\int_{R\mathbb{S}^2}
\frac{\displaystyle{
e^{-R}}
}{\displaystyle{
R\sqrt{||x|-\theta_1|^2+|\theta_2|^2+|\theta_3|^2}
}}
d\sigma(\theta)
dR\\
&=
\int_{0}^{\infty}
R^{-1}e^{-R}
\int_{R\mathbb{S}^2}
\frac{d\sigma(\theta)}{\displaystyle{
\sqrt{||x|-\theta_1|^2+|\theta_2|^2+|\theta_3|^2}
}}
dR.\\
&=
\int_{0}^{\infty}
R^{-1}e^{-R}
\int_{-R}^{R}
\int_{\sqrt{R^2-s^2}\mathbb{S}^1}
\frac{d\sigma(\vartheta)}{\displaystyle{
\sqrt{||x|-s|^2+R^2-s^2}
}}
\frac{Rds}{\sqrt{R^2-s^2}}dR.\\
&=
4\pi
\int_{0}^{\infty}
e^{-R}
\frac{\min\{ R,|x|\}}{|x|}
dR
=
\frac{4\pi}{|x|}
\left(
1-e^{-|x|}
\right) .
\end{align*} 
Thus, we obtain 
\begin{align}\label{GK-Yukawa}
\left\|
\frac{e^{-r}}{r}
\right\|_{\mathcal{K}}
=
4\pi .
\end{align}
Hence (ii) is true.
\end{proof}
\end{prop}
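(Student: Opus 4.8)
The plan is to reduce all three assertions to a handful of explicit computations involving the radial profile $W(x):=e^{-|x|}/|x|$, together with one scaling observation.

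First I would pass to polar coordinates to evaluate the Lebesgue norms of $W$. For $1\le p<3$, the substitution $s=pr$ gives
\[
\left\| \frac{e^{-r}}{r} \right\|_p^p = 4\pi\int_0^\infty e^{-pr}r^{2-p}\,dr = 4\pi\,p^{\,p-3}\,\Gamma(3-p);
\]
in particular $\|W\|_1 = 4\pi\,\Gamma(2) = 4\pi$, which is assertion (iii), and $\|W\|_{3/2} = \bigl(2\pi^{3/2}(3/2)^{-3/2}\bigr)^{2/3} = 2^{5/3}\pi/3$. For (ii) I would then invoke the explicit Aubin–Talenti value of the sharp constant in $\dot H^1(\mathbb{R}^3)\hookrightarrow L^6(\mathbb{R}^3)$ and simply multiply: the product $C_b^2\,\|W\|_{3/2}$ comes out to $8\pi^{-1/3}/9$, which is $<1$ because $8<9$ and $\pi^{-1/3}<1$.

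For (i) the essential point is that the hypothesis $\max\{\|V_0\|_R,\|V_0\|_{\mathcal K}\}<4\pi$ of Proposition \ref{prop:decay}, which underlies (\ref{RK-e}), scales homogeneously. Writing $V_0(x)=-Q_0\mu_0\,W(\mu_0 x)$ and changing variables in both defining integrals yields $\|V_0\|_R=(|Q_0|/\mu_0)\|W\|_R$ and $\|V_0\|_{\mathcal K}=(|Q_0|/\mu_0)\|W\|_{\mathcal K}$, so (\ref{RK-e}) reads precisely $(|Q_0|/\mu_0)\max\{\|W\|_R,\|W\|_{\mathcal K}\}<4\pi$. Thus (i) amounts to proving $\max\{\|W\|_R,\|W\|_{\mathcal K}\}=4\pi$. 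For the Kato norm I would compute the radial convolution $W\ast|x|^{-1}$: the slick route is to note that $e^{-|x|}/(4\pi|x|)$ and $1/(4\pi|x|)$ are the fundamental solutions of $-\Delta+1$ and $-\Delta$ on $\mathbb{R}^3$, so on the Fourier side their convolution is $(1+|\xi|^2)^{-1}|\xi|^{-2}=|\xi|^{-2}-(1+|\xi|^2)^{-1}$, which inverts to $W\ast|x|^{-1}=4\pi(1-e^{-|x|})/|x|$, a positive radial function strictly decreasing in $|x|$ with supremum $4\pi$ attained as $|x|\to 0$; hence $\|W\|_{\mathcal K}=4\pi$. (Alternatively one can evaluate the spherical integral $\int_{\mathbb{R}^3}e^{-|y|}/(|y|\,|x-y|)\,dy$ directly, which is more laborious but entirely elementary.) For the Rollnik norm I would write $\|W\|_R^2=\langle W\ast|x|^{-2},W\rangle$, apply H\"older and the sharp Hardy–Littlewood–Sobolev inequality of Lieb in the form $\|\psi\ast|x|^{-2}\|_3\le 2^{2/3}\pi^{4/3}\|\psi\|_{3/2}$, and conclude $\|W\|_R\le 2^{1/3}\pi^{2/3}\|W\|_{3/2}=4\pi^{5/3}/3<4\pi$ since $\pi^{2/3}<3$. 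Combining, $\max\{\|W\|_R,\|W\|_{\mathcal K}\}=4\pi$, and substituting back turns (\ref{RK-e}) into $|Q_0|<\mu_0$, i.e. (\ref{RK}).

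I expect the Kato-norm computation to be the one genuine obstacle, since it is the single place an honest integral must be evaluated; everything else is bookkeeping with Gamma functions, a scaling change of variables, and citing the sharp Sobolev and Hardy–Littlewood–Sobolev constants as black boxes. If the fundamental-solution shortcut is judged too terse, the safe fallback is the direct spherical-coordinates evaluation of $W\ast|x|^{-1}$, which requires a little care with the inner sphere integral but no new ideas.
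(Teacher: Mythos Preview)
Your proposal is correct and follows essentially the same route as the paper: the $L^p$ norms via polar coordinates and the Gamma function, Aubin--Talenti for $C_b$, Lieb's sharp HLS plus H\"older for the Rollnik bound, and an explicit evaluation of $W\ast|x|^{-1}$ for the Kato norm. The one genuine variation is your Fourier/fundamental-solution shortcut for computing $W\ast|x|^{-1}=4\pi(1-e^{-|x|})/|x|$; the paper instead carries out the spherical-coordinates integration you list as your fallback. Your Fourier argument is cleaner and perfectly valid (the partial-fraction identity $|\xi|^{-2}(1+|\xi|^2)^{-1}=|\xi|^{-2}-(1+|\xi|^2)^{-1}$ does the work), while the paper's direct calculation has the advantage of being self-contained and avoiding distributional Fourier transforms. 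Your explicit scaling reduction for (i) is also a helpful clarification that the paper leaves implicit in the formulation of (\ref{RK-e}).
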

\subsection*{Acknowledgments}
The author would like to acknowledge 
the helpful advice of Dr. Itaru Sasaki.  
The author is grateful to the referee for 
pointing out some gaps in the manuscript. 

\end{document}